\let\oldqedsymbol\qedsymbol
\DeclareMathOperator{\Con}{Con}
\DeclareMathOperator{\tok}{tok}
\DeclareMathOperator{\typ}{typ}
\DeclarePairedDelimiter{\abs}{\lvert}{\rvert}
\DeclarePairedDelimiterX{\Set}[2]{\{}{\}}{ #1 \mathchoice{\:}{\:}{\,}{\,}\delimsize\vert\allowbreak\mathchoice{\:}{\:}{\,}{\,}\mathopen{} #2 }
\DeclarePairedDelimiterX{\Seq}[2]{\langle}{\rangle}{ #1 \mathchoice{\:}{\:}{\,}{\,}\delimsize\vert\allowbreak\mathchoice{\:}{\:}{\,}{\,}\mathopen{} #2 }
\theoremstyle{plain}
\newtheorem{theorem}{Theorem}[section]
\newtheorem{proposition}[theorem]{Proposition}
\newtheorem{lemma}[theorem]{Lemma}
\newtheorem{corollary}[theorem]{Corollary}
\newtheorem{claim}{Claim}
\theoremstyle{definition}
\newtheorem{definition}[theorem]{Definition}
\newtheorem{question}[theorem]{Question}
\theoremstyle{remark}
\newtheorem{remark}[theorem]{Remark}
\newtheorem{notation}[theorem]{Notation}
\newtheorem{examples}[theorem]{Examples}
\begin{document}
\title{Properties preserved by classes of Chu transforms}
\author{Francesco Parente}
\address{Graduate School of System Informatics\\Kobe University\\1-1 Rokkodai-cho Nada-ku\\Kobe 657-8501\\Japan}
\thanks{The author is an International Research Fellow of the Japan Society for the Promotion of Science}
\thanks{Mirna D{\v z}amonja was a co-author of the accepted version of this paper. She withdrew her participation as an author due to her perception of the origins of the research project and the interference in the research procedure by the originators of the project. She apologizes to the Israel Journal of Mathematics for withdrawing her name as an author of the paper, especially at the moment when she strongly feels her support to Israel}
\begin{abstract}
Chu spaces and Chu transforms were first investigated in category theory by Barr and Chu in 1979. In 2000 van Benthem shifted to the model-theoretic point of view by isolating a class of infinitary two-sorted properties, the flow formulas, which are preserved by all Chu transforms. D{\v z}amonja and V{\"a}{\"a}n{\"a}nen in 2021 considered a special kind of Chu transforms, satisfying a density condition. These authors used dense Chu transforms to compare abstract logics, in particular showing that such transforms preserve compactness.

This motivates the problem of characterizing which properties are preserved by dense Chu transforms. We solve this problem by isolating the inconsistency-flow formulas, which are flow formulas with an added predicate to express inconsistency. Our main result characterizes the first-order inconsistency-flow formulas exactly as those first-order properties preserved by dense Chu transforms. Furthermore, we consider several instances of Chu transforms in concrete situations such as topological spaces, graphs and ultrafilters. In particular, we prove that Chu transforms between ultrafilters correspond to the Rudin-Keisler ordering.
\end{abstract}

\maketitle

\section{Introduction}

The Chu construction was introduced in 1979 by Michael Barr~\cite{MR0550878} and his student Po-Hsiang Chu~\cite[Appendix]{MR0550878}, who used it to produce examples of $*$-autonomous categories. When applied to the category of sets, this construction gives rise to Chu spaces and Chu transforms, which are quite a general notion that encompasses concepts such as continuous functions between topological spaces. Johan van Benthem, in \cite{MR1830523}, shifted the perspective on Chu spaces from the categorical point of view to the model-theoretic one. In this paper we extend that perspective, answering some of his questions and introducing new paradigms.

More precisely, van Benthem introduced a class of two-sorted infinitary formulas, called the \emph{flow formulas}, and proved a preservation theorem for the first-order fragment of this class under Chu transforms. In \cite[Definition~2.2]{MR4357456} D{\v z}amonja and Jouko V{\"a}{\"a}n{\"a}nen defined \emph{dense} Chu transforms and used them to compare abstract logics by a sublogic relation. As a result, they showed that dense Chu transforms between abstract logics preserve compactness. Dense Chu transforms are relevant to other contexts as well: for example, Chu transforms between partially ordered sets automatically satisfy the density condition. This motivates the problem of characterizing which properties are preserved by dense Chu transforms.

We solve this problem in our main Theorem~\ref{theorem:preservationdense}. This is done by isolating the class of \emph{inconsistency-flow} formulas, which are flow formulas with the added capability of expressing inconsistency in the second sort. As a consequence of Theorem~\ref{theorem:preservationdense}, we are able to characterize the properties preserved by the sublogic relation from \cite[Definition~2.2]{MR4357456}, at least for their first-order fragment.

Going back to van Benthem, in \cite[\S 7.4]{MR1830523} he suggested the possibility of obtaining further preservation results, notably for surjective Chu transforms. We show that such a result indeed holds, as a consequence of a general interpolation theorem by Martin Otto~\cite[Theorem~1]{MR1814123}. On a related note, Michael Gemignani~\cite{MR0235505} posed the problem of characterizing topological properties which are preserved by passing to a coarser topology. In Theorem~\ref{th:regressive=projective-flow} we obtain such a characterization by formulating Gemignani's problem in the language of Chu spaces and then applying Otto's theorem once again.

One of the main tenets of this paper is that Chu spaces and Chu transforms capture a variety of concepts in mathematics. For example, Chu transforms between partially ordered sets already appeared in the work of Micha{\"e}l Benado~\cite{MR0028817} in 1949, under the name ``connexions monotones d'esp{\`e}ce mixte''. In this paper we show that Chu transforms also encapsulate the Rudin-Keisler ordering between ultrafilters (Theorem~\ref{th:RudinKeislerorder}) and this through an argument that generalizes a 1956 proof by Walter Rudin~\cite{MR0080902}. For more recent applications, we show that Chu transforms correspond to strictly continuous functions between graphs (Proposition~\ref{proposition:graphchu}), as well as adjointable maps between orthosets with $0$ (Proposition~\ref{proposition:ortho}).

The presentation of our results is organized as follows. In Section~\ref{section:chu} we introduce the central notions of Chu spaces and Chu transforms and discuss some of their basic properties. Section~\ref{sec:Benthem} contains our preservation result for surjective Chu transforms, together with the solution to Gemignani's problem. Section~\ref{sec:inconflow} is dedicated to our main Theorem~\ref{theorem:preservationdense} and its role in the preservation of compactness properties of abstract logics and topological spaces. Finally, our results on ultrafilters and graphs are presented in Section~\ref{sec:ultrafilters} and Section~\ref{section:graphs}, respectively.

\section{Chu spaces and Chu transforms: definitions and examples}\label{section:chu}

The original definition of a Chu space comes from category theory and relates two possibly distinct sorts of objects by a binary relation.

\begin{definition}\label{def:Chu}
A ($2$-valued) \emph{Chu space} is a triple $\langle X,r,A\rangle$ with $r\subseteq X\times A$. Elements of $X$ are called \emph{points} and elements of $A$ are called \emph{states}.\footnote{In the literature, Chu spaces have sometimes been denoted by $\langle V,V',v\rangle$ or by $\langle A,r,X\rangle$. We adopt the notation which feels most natural to represent the examples we have in mind.}
\end{definition}

\begin{examples}\leavevmode
\begin{enumerate}
\item A typical and motivating example of a Chu space is a topological space, discussed in \S\ref{subsection:top}.
\item Another important example are abstract logics, introduced in \S\ref{subsection:log}.
\item In \S\ref{sec:ultrafilters} we discuss partially ordered sets, represented as Chu spaces $\langle P,\le,P \rangle$.
\item Perhaps more surprisingly, another example is obtained in the world of ultrafilters, simply by coding an ultrafilter $U$ as a Chu space $\langle U,\supseteq,U\rangle$. This seemingly trivial representation gives us a correspondence with the Rudin-Keisler ordering, as we show in Theorem~\ref{th:RudinKeislerorder}.
\item Two further examples are treated in \S\ref{section:graphs}, namely graphs and orthosets with~$0$.
\end{enumerate}
\end{examples}

\begin{notation}
We shall keep the convention that for a Chu space $\langle X,r,A\rangle$, the elements of $X$ are denoted by letters $x,y,z$ etc.\ and the elements of $A$ are denoted by $a,b,c$ etc. 
\end{notation}

The following separation properties seem to have first been considered by Barr~\cite[\S 6]{MR1132146}, under a slightly different terminology.

\begin{definition}
A Chu space $\langle X,r,A\rangle$ is:
\begin{itemize}
\item \emph{separated} if $\forall x\forall y\bigl(x=y\iff\forall a(\langle x,a\rangle\in r\iff\langle y,a\rangle\in r)\bigr)$,
\item \emph{extensional} if $\forall a\forall b\bigl(a=b\iff\forall x(\langle x,a\rangle\in r\iff\langle x,b\rangle\in r)\bigr)$.
\end{itemize}
\end{definition}

Although it is not used in this paper, we note that since the defining properties of separativity and extensionality are given through equivalence relations, every Chu space has a naturally defined separated and extensional quotient, which one could call the \emph{biextensional collapse}.

\begin{definition}\label{def:negconjdisj}
Let $\kappa$ be a cardinal. A Chu space $\langle X,r,A\rangle$:
\begin{itemize}
\item \emph{admits complements} if for every $a\in A$ there exists $b\in A$ such that for all $x\in X$ we have $\langle x,a\rangle\in r\iff\langle x,b\rangle\notin r$;
\item \emph{admits ${<}\kappa$-intersections} if for every $B\subseteq A$ with $\abs{B}<\kappa$ there exists $c\in A$ such that for all $x\in X$
we have $\langle x,c\rangle\in r\iff\forall b\in B(\langle x,b\rangle\in r)$;
\item \emph{admits ${<}\kappa$-unions} if for every $B\subseteq A$ with $\abs{B}<\kappa$ there exists $d\in A$ such that for all $x\in X$ we have $\langle x,d\rangle\in r\iff\exists c\in B(\langle x,c\rangle\in r)$.
\end{itemize}
\end{definition}

We define one of the central notions of this paper.

\begin{definition}\label{def:chut}
A \emph{Chu transform} from a Chu space $\langle X,r,A\rangle$ to a Chu space $\langle Y,s,B\rangle$ consists of two functions $f\colon X\to Y$ and $g\colon B\to A$ which are \emph{adjoint} in the sense that for every $x\in X$ and every $b\in B$
\[
\langle x,g(b)\rangle\in r\iff\langle f(x),b\rangle\in s.
\]
\end{definition}

\begin{remark}
The category of Chu spaces is related to both the Dialectica-like category of Valeria de Paiva~\cite{MR1031571} and the generalized Galois-Tukey connections\footnote{viewed as a category} of Peter Vojt{\'a}{\v s}~\cite{MR1234291}. This in the sense that they all have the same objects. Moreover, a pair of functions is a Chu transform if and only if it is both a Dialectica morphism and a generalized Galois-Tukey connection. For further details, we refer the reader to the comparative analyses of Andreas Blass~\cite{MR1356008} and de Paiva~\cite{zbMATH05171536}.
\end{remark}

In the nineties, Jon Barwise and Jerry Seligman~\cite{MR1472482} rediscovered Chu spaces and Chu transforms as the fundamental theory that underlies information flow. More precisely, a \emph{classification} $\bm{A}$, in the sense of \cite[Definition~4.1]{MR1472482}, is just a Chu space $\langle\tok(\bm{A}),\vDash_{\bm{A}},\typ(\bm{A})\rangle$, where $\tok(\bm{A})$ is the set of tokens of $\bm{A}$, $\typ(\bm{A})$ is the set of types of $\bm{A}$, and $a\vDash_{\bm{A}}\alpha$ holds if and only if the token $a$ is of type $\alpha$. In addition to that, given classifications $\bm{A}$ and $\bm{B}$ together with functions $f\hat{\mkern6mu}\colon\typ(\bm{A})\to\typ(\bm{B})$ and $f\check{\mkern6mu}\colon\tok(\bm{B})\to\tok(\bm{A})$, we have that $(f\hat{\mkern6mu},f\check{\mkern6mu})$ is an \emph{infomorphism} from $\bm{A}$ to $\bm{B}$, in the sense of \cite[Definition~4.8]{MR1472482}, if and only if $(f\check{\mkern6mu},f\hat{\mkern6mu})$ is a Chu transform from $\langle\tok(\bm{B}),\vDash_{\bm{B}},\typ(\bm{B})\rangle$ to $\langle\tok(\bm{A}),\vDash_{\bm{A}},\typ(\bm{A})\rangle$.

The following special kinds of Chu transforms have been considered in the literature.

\begin{definition}\label{def:surj}
A Chu transform $(f,g)$ from $\langle X,r,A\rangle$ to $\langle Y,s,B\rangle$ is
\begin{itemize}
\item \emph{dense} iff for every $b\in B$, if there exists $y\in Y$ such that $\langle y,b\rangle\in s$, then there exists $x\in X$ such that $\langle f(x),b\rangle\in s$;
\item \emph{surjective} iff $f$ is a surjective function;
\item a \emph{restriction in the second sort} iff $X=Y$, $B\subseteq A$, $f=\mathrm{id}_X$, and $g=\mathrm{id}_B$.
\end{itemize}
\end{definition}

In addition to \cite{MR1830523}, where they were first considered, surjective Chu transforms were defined by Marta Garc{\'\i}a-Matos and V{\"a}{\"a}n{\"a}nen~\cite{MR2134728} in the context of abstract logics.\footnote{The authors of \cite{MR2134728} do not use the terminology of Chu transforms, but rather define the specific instance of a Chu transform between abstract logics which gives the sublogic relation, \cite[Definition~2.2]{MR2134728}.} Restrictions in the second sort were used by Solomon Feferman in \cite{feferman:johan99} and Otto in \cite{MR1814123}. Dense Chu transforms were introduced more recently by D{\v z}amonja and V{\"a}{\"a}n{\"a}nen~\cite[Definition~2.2]{MR4357456}.

From Definition~\ref{def:surj}, it follows that every surjective Chu transform is dense and that every restriction in the second sort is surjective. Also notice that, in the case of restrictions in the second sort, the adjointness condition of Definition~\ref{def:chut} comes down to $\langle x,b\rangle\in r\iff\langle x,b\rangle\in s$, which is just saying that $s=r\cap (Y\times B)$.

\subsection{Topological spaces}\label{subsection:top}

Topological spaces can be encoded as Chu spaces. We present a slightly different angle on this by considering the topological systems defined by Steven Vickers.

\begin{definition}[{Vickers~\cite[Definition~5.1.1]{MR1002193}}]\label{def:vi}
A \emph{topological system} is a Chu space which admits ${<}\aleph_0$-intersections and admits ${<}\kappa$-unions for every $\kappa$.
\end{definition}

According to Definition~\ref{def:vi}, if $\langle X,\tau(X)\rangle$ is a topological space then $\langle X,\in,\tau(X)\rangle$ is an extensional topological system. Let us note that $\langle X,\tau(X)\rangle$ is $T_0$ if and only if $\langle X,\in,\tau(X)\rangle$ is separated. For topological systems, admitting ${<}\kappa$-intersections means that the intersection of fewer than $\kappa$ many open sets is open, which true by definition if $\kappa=\aleph_0$ and true for larger $\kappa$ for the ${<}\kappa$-open spaces defined by Roman Sikorski in \cite{MR0040643} and considered by David Buhagiar and D{\v z}amonja in \cite{MR4646733}.

If $\mathcal{B}(X)$ is a basis for the topology of $X$, then $\langle X,\in,\mathcal{B}(X)\rangle$ is also a Chu space. For this space, admitting complements means that $\mathcal{B}(X)$ is closed under set-theoretic complements and hence $\mathcal{B}(X)$ consists of clopen sets and $X$ is zero-dimensional.

The next proposition shows that Chu transforms between topological spaces naturally correspond to continuous functions.

\begin{proposition}[{Vickers~\cite[Proposition~5.3.1]{MR1002193}}]\label{proposition:vickers}
Let $\langle X,\tau(X)\rangle$ and $\langle Y,\tau(Y)\rangle$ be topological spaces. For a pair of functions $f\colon X\to Y$ and $g\colon\tau(Y)\to\tau(X)$, the following conditions are equivalent:
\begin{itemize}
\item $(f,g)$ is a Chu transform from $\langle X,\in,\tau(X)\rangle$ to $\langle Y,\in,\tau(Y)\rangle$;
\item $f$ is continuous and for every $b\in\tau(Y)$, $f^{-1}[b]=g(b)$.
\end{itemize}
\end{proposition}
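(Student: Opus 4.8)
The plan is to prove the equivalence directly from the adjointness condition in Definition~\ref{def:chut}, applied to the Chu spaces $\langle X,\in,\tau(X)\rangle$ and $\langle Y,\in,\tau(Y)\rangle$. Here the relation symbol is $\in$, so adjointness of $(f,g)$ reads: for every $x\in X$ and every $b\in\tau(Y)$,
\[
x\in g(b)\iff f(x)\in b.
\]
The right-hand side says exactly $x\in f^{-1}[b]$, so the adjointness condition is literally the statement that $g(b)=f^{-1}[b]$ as subsets of $X$, \emph{provided} we already know both sides are legitimate elements of the respective Chu spaces. This observation will be the backbone of both directions.

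First I would prove the forward direction. Assume $(f,g)$ is a Chu transform. For each $b\in\tau(Y)$, the element $g(b)$ lies in $\tau(X)$ by definition of $g$ as a function $\tau(Y)\to\tau(X)$; and by the displayed adjointness equivalence, $x\in g(b)$ iff $x\in f^{-1}[b]$ for all $x\in X$, hence $g(b)=f^{-1}[b]$. In particular $f^{-1}[b]=g(b)\in\tau(X)$, i.e.\ the preimage of every open set is open, which is precisely continuity of $f$. So the second condition holds.

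Conversely, suppose $f$ is continuous and $f^{-1}[b]=g(b)$ for every $b\in\tau(Y)$. Continuity guarantees $f^{-1}[b]\in\tau(X)$, so the hypothesis $g(b)=f^{-1}[b]$ is consistent with $g$ being a well-defined function $\tau(Y)\to\tau(X)$ (indeed it pins $g$ down uniquely). Then for every $x\in X$ and $b\in\tau(Y)$ we have $x\in g(b)\iff x\in f^{-1}[b]\iff f(x)\in b$, which is the adjointness condition for the relation $\in$; hence $(f,g)$ is a Chu transform from $\langle X,\in,\tau(X)\rangle$ to $\langle Y,\in,\tau(Y)\rangle$.

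There is essentially no hard part here: the proposition is a direct unwinding of definitions, and the only thing to be careful about is the bookkeeping of \emph{where each object lives} --- namely, that in a Chu transform $g$ is required a priori to take values in $\tau(X)$, so in the forward direction continuity is forced by the equation $g(b)=f^{-1}[b]$, whereas in the backward direction it is continuity that makes the equation well-typed. The mild subtlety worth flagging is that the equivalence in Proposition~\ref{proposition:vickers} is phrased for a \emph{given} pair $(f,g)$; one should note in passing that, given a continuous $f$, the accompanying $g$ with $f^{-1}[b]=g(b)$ exists and is unique, so that continuous functions and Chu transforms between the associated topological systems are in bijective correspondence --- but that refinement is not part of the stated proposition and need not be belabored.
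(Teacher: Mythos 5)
Your argument is correct: the adjointness condition for the relation $\in$ is literally the statement $g(b)=f^{-1}[b]$, and the typing of $g$ as a map into $\tau(X)$ yields continuity in one direction while continuity makes the equation well-typed in the other. The paper itself gives no proof, citing Vickers instead, and your definitional unwinding is exactly the standard argument intended there.
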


It is now easy to see that, for topological spaces, dense Chu transforms correspond to continuous functions with dense image, surjective Chu transforms correspond to surjective continuous functions, and a restriction in the second sort is just a coarsening of the topology.

\subsection{Abstract logics}\label{subsection:log}

The concept of \emph{abstract logic} can be traced back to the work of Andrzej Mostowski~\cite{MR0089816}, Per Lindstr{\"o}m~\cite{MR0244013}, and Barwise~\cite{MR0376337}. For the purpose of this paper, an abstract logic is simply a Chu space $\langle M,\vDash,S\rangle$, where $M$ are the models, $S$ are the sentences, and $\vDash$ is the satisfaction relation between models and sentences.

An abstract logic $\langle M,\vDash,S\rangle$ is separated if and only if no two different models in $M$ are elementarily equivalent, and it is extensional if and only if no two different sentences in $S$ are logically equivalent. To admit complements means that $S$ is closed under negation and admitting ${<}\kappa$-intersections (unions) means that $S$ is closed under ${<}\kappa$-conjunctions (disjunctions).

A key observation is that a Chu transform $(f,g)$ from an abstract logic $\langle M,\vDash, S\rangle$ to an abstract logic $\langle M',\vDash', S'\rangle$ is dense if and only if $g$ preserves satisfiability, in the sense that if $\sigma'\in S'$ is satisfiable then $g(\sigma')$ must also be satisfiable. Motivated by this observation, dense Chu transforms can be used to define a sublogic relation between abstract logics.

\begin{definition}[{D{\v z}amonja and V{\"a}{\"a}n{\"a}nen~\cite[Definition~2.2]{MR4357456}}]
Let $\langle M,\vDash, S\rangle$ and $\langle M',\vDash', S'\rangle$ be abstract logics. We say that $\langle M',\vDash', S'\rangle$ is a \emph{sublogic} of $\langle M,\vDash,S\rangle$, in symbols $\langle M',\vDash', S'\rangle\le\langle M,\vDash, S\rangle$, if there exists a dense Chu transform from $\langle M,\vDash, S\rangle$ to $\langle M',\vDash', S'\rangle$.
\end{definition}

Further properties of abstract logics will be discussed in \S\ref{sec:inconflow}.

\section{Flow formulas and preservation under certain types of Chu transforms}\label{sec:Benthem}

According to van Benthem~\cite{MR1830523}, each Chu space can be naturally viewed as a two-sorted relational structure, with one sort consisting of points and the other sort consisting of states. Without loss of generality, we shall assume the two sorts to be disjoint.

The language consists of two-sorted infinitary formulas, with variables $v^\mathsf{p}$ corresponding to points, variables $v^\mathsf{s}$ corresponding to states, and a binary relation symbol $R$ meant to be interpreted as the relation $r$ between sorts. Boldface letters, such as $\bm{v}^\mathsf{p}$ or $\bm{v}^\mathsf{s}$, syntactically denote (possibly infinite) tuples.

\begin{definition}
The $\mathcal{L}^\mathrm{II}_{\infty,\infty}$-formulas are defined recursively as follows:
\begin{itemize}
\item $(v^\mathsf{p}_0=v^\mathsf{p}_1)$, $(v^\mathsf{s}_0=v^\mathsf{s}_1)$ and $R(v^\mathsf{p},v^\mathsf{s})$ are atomic $\mathcal{L}^\mathrm{II}_{\infty,\infty}$-formulas;
\item the negation of an $\mathcal{L}^\mathrm{II}_{\infty,\infty}$-formula is an $\mathcal{L}^\mathrm{II}_{\infty,\infty}$-formula;
\item infinitary conjunctions and disjunctions of $\mathcal{L}^\mathrm{II}_{\infty,\infty}$-formulas are $\mathcal{L}^\mathrm{II}_{\infty,\infty}$-for\-mulas;
\item if $\varphi$ is an $\mathcal{L}^\mathrm{II}_{\infty,\infty}$-formula, then $\exists\bm{v}^\mathsf{p}\varphi$, $\forall\bm{v}^\mathsf{p}\varphi$, $\exists\bm{v}^\mathsf{s}\varphi$ and $\forall\bm{v}^\mathsf{s}\varphi$ are $\mathcal{L}^\mathrm{II}_{\infty,\infty}$-formulas.
\end{itemize}

Let $\kappa\le\lambda$ be infinite cardinals. The fragment $\mathcal{L}^\mathrm{II}_{\lambda,\kappa}$ consists of formulas which contain only conjunctions and disjunctions of size $<\lambda$ and quantification over tuples of length $<\kappa$.
\end{definition}

The interpretation of the satisfaction relation for this two-sorted language is done analogously to the standard definition for the single-sorted $\mathcal{L}_{\infty,\infty}$ language; see the book by Herbert Enderton~\cite[\S 4.3]{MR1801397} for further details on many-sorted logic. In particular, each two-sorted structure can be converted into a single-sorted structure with additional relation symbols to keep track of the sorts. In this way, usual model-theoretic techniques such as compactness and elementary extensions can be applied to the $\mathcal{L}^\mathrm{II}_{\omega,\omega}$ fragment.

\begin{definition}\label{def:preservation}
We say that an $\mathcal{L}^\mathrm{II}_{\infty,\infty}$-formula $\varphi(\bm{v}^\mathsf{p},\bm{v}^\mathsf{s})$ is \emph{preserved under Chu transforms} satisfying some property $P$ if, whenever $(f,g)$ is a Chu transform from $\langle X,r,A\rangle$ to $\langle Y,s,B\rangle$ and $(f,g)$ satisfies $P$, then for all $\bm{x}\in X$ and $\bm{b}\in B$ we have
\[
\langle X,r,A\rangle\models\varphi[\bm{x},g(\bm{b})]\implies\langle Y,s,B\rangle\models\varphi[f(\bm{x}),\bm{b}].
\]
The notation $f(\bm{x})$ denotes the tuple obtained by applying $f$ pointwise to $\bm{x}$, and similarly for $g(\bm{b})$.
\end{definition}

\subsection{Flow formulas and preservation under Chu transforms}

Flow formulas form a subclass of $\mathcal{L}^\mathrm{II}_{\infty,\infty}$ obtained by restricting equality, negation and quantification. Here are the formal definition and the first results of \cite{MR1830523}.

\bigskip

\begin{definition}\label{definition:flow}
The \emph{flow formulas} are defined recursively as follows:
\begin{itemize}
\item $(v^\mathsf{p}_0=v^\mathsf{p}_1)$, $\lnot(v^\mathsf{s}_0=v^\mathsf{s}_1)$, $R(v^\mathsf{p},v^\mathsf{s})$, and $\lnot R(v^\mathsf{p},v^\mathsf{s})$
are atomic flow formulas;
\item infinitary conjunctions and disjunctions of flow formulas are flow formulas;
\item if $\varphi$ is a flow formula, then $\exists\bm{v}^\mathsf{p}\varphi$ and $\forall\bm{v}^\mathsf{s}\varphi$ are flow formulas.
\end{itemize}
\end{definition}

\begin{remark}\label{why-not-equal}
We explain the special treatment of the equality and negation in Definition~\ref{definition:flow}. Namely, the $\mathcal{L}^\mathrm{II}_{\infty,\infty}$-atomic formulas of the form $\lnot(v^\mathsf{p}_0=v^\mathsf{p}_1)$ and $(v^\mathsf{s}_0=v^\mathsf{s}_1)$ are not included in the class of flow formulas. The reason is that the main purpose of flow formulas is to characterize the properties of Chu spaces which are preserved by Chu transforms, as in Proposition~\ref{proposition:vb}. The property  $\lnot(v^\mathsf{p}_0=v^\mathsf{p}_1)$ is not preserved by a Chu transform $(f,g)$, unless $f$ is injective. Analogously, the property $(v^\mathsf{s}_0=v^\mathsf{s}_1)$ is not preserved by $(f,g)$, unless $g$ is injective. The original setup by van Benthem did not include any equalities.
\end{remark}

The following proposition is a generalization of \cite[Proposition~3.1]{MR1830523}, where the result is established for first-order flow formulas. Since our class of formulas is larger, we provide a detailed proof of the proposition, for completeness.

\begin{proposition}\label{proposition:vb}
All flow formulas are preserved under Chu transforms.
\end{proposition}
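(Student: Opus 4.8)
The plan is to prove this by induction on the structure of flow formulas, mirroring Definition~\ref{definition:flow} and checking that each clause preserves the implication in Definition~\ref{def:preservation}. Throughout, fix a Chu transform $(f,g)$ from $\langle X,r,A\rangle$ to $\langle Y,s,B\rangle$; the adjointness identity $\langle x,g(b)\rangle\in r\iff\langle f(x),b\rangle\in s$ is the only property we will use, so the result holds for \emph{all} Chu transforms with no extra hypothesis.

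For the base case we must handle the four atomic flow formulas. The formula $R(v^\mathsf{p},v^\mathsf{s})$ and its negation are exactly the adjointness condition: $\langle x,g(b)\rangle\in r$ iff $\langle f(x),b\rangle\in s$, which gives preservation in both directions (so in particular $\lnot R$ is preserved). For $v^\mathsf{p}_0=v^\mathsf{p}_1$, if $x_0=x_1$ then $f(x_0)=f(x_1)$ since $f$ is a function; and for $\lnot(v^\mathsf{s}_0=v^\mathsf{s}_1)$, if $g(b_0)\neq g(b_1)$ then necessarily $b_0\neq b_1$ since $g$ is a function. These are precisely the asymmetries built into Definition~\ref{definition:flow}, and Remark~\ref{why-not-equal} already flags why the two excluded atomic cases fail. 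The conjunction and disjunction steps are immediate: if $\varphi=\bigwedge_{i}\varphi_i$ and each $\varphi_i$ is preserved, then $\langle X,r,A\rangle\models\varphi[\bm x,g(\bm b)]$ gives $\langle X,r,A\rangle\models\varphi_i[\bm x,g(\bm b)]$ for all $i$, hence $\langle Y,s,B\rangle\models\varphi_i[f(\bm x),\bm b]$ for all $i$, hence $\langle Y,s,B\rangle\models\varphi[f(\bm x),\bm b]$; disjunction is the same with ``for some $i$''.

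The two quantifier clauses are where the direction of the quantifier matters, and this is the crux of the argument. For $\exists\bm v^\mathsf{p}\varphi$: suppose $\langle X,r,A\rangle\models\exists\bm v^\mathsf{p}\varphi[\bm x,g(\bm b)]$, so there is a tuple $\bm x'$ in $X$ with $\langle X,r,A\rangle\models\varphi[\bm x',\bm x,g(\bm b)]$. By the induction hypothesis $\langle Y,s,B\rangle\models\varphi[f(\bm x'),f(\bm x),\bm b]$, and since $f(\bm x')$ is a tuple in $Y$ we get $\langle Y,s,B\rangle\models\exists\bm v^\mathsf{p}\varphi[f(\bm x),\bm b]$ — existentials over points travel forward along $f$. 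For $\forall\bm v^\mathsf{s}\varphi$: suppose $\langle X,r,A\rangle\models\forall\bm v^\mathsf{s}\varphi[\bm x,g(\bm b)]$; to verify $\langle Y,s,B\rangle\models\forall\bm v^\mathsf{s}\varphi[f(\bm x),\bm b]$ take an arbitrary tuple $\bm b'$ in $B$. Then $g(\bm b')$ is a tuple in $A$, so the hypothesis gives $\langle X,r,A\rangle\models\varphi[\bm x,g(\bm b'),g(\bm b)]=\varphi[\bm x,g(\bm b'\bm b)]$, and the induction hypothesis yields $\langle Y,s,B\rangle\models\varphi[f(\bm x),\bm b',\bm b]$; as $\bm b'$ was arbitrary, $\langle Y,s,B\rangle\models\forall\bm v^\mathsf{s}\varphi[f(\bm x),\bm b]$ — universals over states travel forward because $g$ maps \emph{into} $A$, letting us instantiate the $X$-side universal at $g(\bm b')$.

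The only real subtlety — and the point to state carefully rather than a genuine obstacle — is bookkeeping of the free-variable tuples and the sorts when passing through binders: one should be explicit that in Definition~\ref{def:preservation} the displayed implication is being proved simultaneously for all relevant assignments, so that the induction hypothesis can be applied with the bound variables folded into the parameter tuple as above. No cardinality restriction is needed, so the argument covers the full infinitary class $\mathcal{L}^\mathrm{II}_{\infty,\infty}$ fragment of flow formulas, which is the generalization over \cite[Proposition~3.1]{MR1830523}. For the opposite combinations ($\forall\bm v^\mathsf{p}$, $\exists\bm v^\mathsf{s}$, and the excluded atomic equalities) one checks that preservation can fail unless $f$, respectively $g$, is surjective, respectively injective — exactly as anticipated in Remark~\ref{why-not-equal} — which is why Definition~\ref{definition:flow} restricts to the combinations treated above.
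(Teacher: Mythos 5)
Your proof is correct and follows essentially the same route as the paper's: induction on the complexity of the flow formula, with the atomic cases handled by functionality of $f$ and $g$ and the two directions of the adjointness equivalence, and the quantifier cases by pushing a witness forward along $f$ for $\exists\bm{v}^\mathsf{p}$ and instantiating the $X$-side universal at $g(\bm{b}')$ for $\forall\bm{v}^\mathsf{s}$. You even spell out the $\forall\bm{v}^\mathsf{s}$ step, which the paper only describes as ``similar'', so no changes are needed.
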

\begin{proof}
Suppose that $(f,g)$ is a Chu transform between Chu spaces $\langle X,r,A\rangle $ and $\langle Y,s,B\rangle$. Let $\varphi$ be any flow formula; we use induction on the complexity of $\varphi$. 

If $x_0,x_1\in X$ are such that $x_0=x_1$, then obviously $f(x_0)=f(x_1)$, and similarly if $b_0,b_1\in B$ satisfy $g(b_0)\neq g(b_1)$, then $b_0\neq b_1$.
If $\varphi$ is $R(v^\mathsf{p},v^\mathsf{s})$ and $\langle X,r,A\rangle\models\varphi[x,g(b)]$ for some $x\in X$ and $b\in B$, then the adjointness condition gives $\langle Y,s,B\rangle\models\varphi[f(x),b]$. Similarly, since the adjointness condition is given in the form of an equivalence, we obtain that the flow formulas of the form $\lnot R(v^\mathsf{p},v^\mathsf{s})$ are preserved. 

A straightforward argument applies to infinitary conjunctions and disjunctions.

If $\langle X,r,A\rangle\models\exists\bm{v}^\mathsf{p}\varphi[\bm{v}^\mathsf{p},\bm{x},g(\bm{b})]$ for some $\bm{x}$ in $X$ and $\bm{b}$ in $B$, then there exists a tuple $\bm{x}'$ in $X$ such that $\langle X,r,A\rangle\models\varphi[\bm{x}',\bm{x},g(\bm{b})]$. This implies inductively that $\langle Y,s,B\rangle\models\varphi[f(\bm{x}'),f(\bm{x}),\bm{b}]$ and finally $\langle Y,s,B\rangle\models\exists\bm{v}^\mathsf{p}\varphi[\bm{v}^\mathsf{p},f(\bm{x}),\bm{b}]$. The case for $\forall\bm{v}^\mathsf{s}\varphi$ is similar.
\end{proof}

Using his framework without equality, van Benthem shows as in Theorem~\ref{the-vb} that Proposition~\ref{proposition:vb} is optimal, at least for the first-order fragment. Feferman~\cite[Theorem~1]{feferman:johan99} gives another proof of Theorem~\ref{the-vb}, using the same framework with equality as in the present paper.

\begin{theorem}[{van Benthem~\cite[Theorem~6.1]{MR1830523}}]\label{the-vb}
Let $\varphi(\bm{v}^\mathsf{p},\bm{v}^\mathsf{s})$ and $\psi(\bm{v}^\mathsf{p},\bm{v}^\mathsf{s})$ be $\mathcal{L}^\mathrm{II}_{\omega,\omega}$-formulas. Then the following are equivalent:
\begin{enumerate}
\item If $(f,g)$ is a Chu transform from $\langle X,r,A\rangle$ to $\langle Y,s,B\rangle$, then for every $\bm{x}\in X$ and $\bm{b}\in B$, we have
\[
\langle X,r,A\rangle\models\varphi[\bm{x},g(\bm{b})]\implies\langle Y,s,B\rangle\models\psi[f(\bm{x}),\bm{b}].
\]
\item There exists a first-order flow formula $\theta(\bm{v}^\mathsf{p},\bm{v}^\mathsf{s})$ such that
\[
\vdash\forall\bm{v}^\mathsf{p}\forall\bm{v}^\mathsf{s}\bigl((\varphi\rightarrow\theta)\land(\theta\rightarrow\psi)\bigr).
\]
\end{enumerate}
\end{theorem}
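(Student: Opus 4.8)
The plan is to prove the two implications separately: (2) $\Rightarrow$ (1) is immediate, while (1) $\Rightarrow$ (2) is a classical-style preservation argument. For (2) $\Rightarrow$ (1): given a first-order flow formula $\theta$ with $\vdash\forall\bm{v}^\mathsf{p}\forall\bm{v}^\mathsf{s}\bigl((\varphi\to\theta)\land(\theta\to\psi)\bigr)$ and a Chu transform $(f,g)$ from $\langle X,r,A\rangle$ to $\langle Y,s,B\rangle$ with $\langle X,r,A\rangle\models\varphi[\bm{x},g(\bm{b})]$, one obtains $\langle X,r,A\rangle\models\theta[\bm{x},g(\bm{b})]$, hence $\langle Y,s,B\rangle\models\theta[f(\bm{x}),\bm{b}]$ by Proposition~\ref{proposition:vb}, hence $\langle Y,s,B\rangle\models\psi[f(\bm{x}),\bm{b}]$. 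This direction uses nothing beyond the preservation of flow formulas.

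For (1) $\Rightarrow$ (2) I would argue in the style of a classical preservation theorem. Treating the free variables $\bm{v}^\mathsf{p},\bm{v}^\mathsf{s}$ as fresh constants, let $\Theta$ be the set of all first-order flow formulas that are logical consequences of $\varphi$. As a finite conjunction of flow formulas is a flow formula, it suffices to prove $\Theta\models\psi$: compactness then furnishes finitely many members of $\Theta$ whose conjunction is the required $\theta$. So assume for contradiction that a Chu space $\mathcal{N}_0=\langle Y_0,s_0,B_0\rangle$ with distinguished tuples $\bm{n}^\mathsf{p}\in Y_0$ and $\bm{n}^\mathsf{s}\in B_0$ satisfies every formula of $\Theta$ together with $\neg\psi$. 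A second compactness argument then yields a Chu space $\mathcal{M}_0$ with distinguished tuples $\bm{m}^\mathsf{p},\bm{m}^\mathsf{s}$ such that $\mathcal{M}_0\models\varphi[\bm{m}^\mathsf{p},\bm{m}^\mathsf{s}]$ while every first-order flow formula true of $(\bm{m}^\mathsf{p},\bm{m}^\mathsf{s})$ in $\mathcal{M}_0$ is true of $(\bm{n}^\mathsf{p},\bm{n}^\mathsf{s})$ in $\mathcal{N}_0$: the theory consisting of $\varphi$ together with the negations of all first-order flow formulas false of $(\bm{n}^\mathsf{p},\bm{n}^\mathsf{s})$ in $\mathcal{N}_0$ is consistent, since otherwise a finite disjunction of flow formulas --- which would then be a member of $\Theta$ --- would fail at $(\bm{n}^\mathsf{p},\bm{n}^\mathsf{s})$, contradicting $\mathcal{N}_0\models\Theta$.

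Next I would pass to sufficiently saturated elementary extensions $\mathcal{M}\succeq\mathcal{M}_0$ and $\mathcal{N}\succeq\mathcal{N}_0$ (the bookkeeping with saturation degrees for possibly large Chu spaces is routine, and may if one prefers be sidestepped by building the transform along a pair of elementary chains). Since flow formulas are first-order, the compatibility between $(\bm{m}^\mathsf{p},\bm{m}^\mathsf{s})$ and $(\bm{n}^\mathsf{p},\bm{n}^\mathsf{s})$ survives, and still $\mathcal{N}\models\neg\psi[\bm{n}^\mathsf{p},\bm{n}^\mathsf{s}]$. The heart of the argument is a back-and-forth construction of a Chu transform $(f,g)\colon\mathcal{M}\to\mathcal{N}$ with $f(\bm{m}^\mathsf{p})=\bm{n}^\mathsf{p}$ and $g(\bm{n}^\mathsf{s})=\bm{m}^\mathsf{s}$: one builds $f$ on all points of $\mathcal{M}$ and $g$ on all states of $\mathcal{N}$, preserving the invariant that every flow formula true in $\mathcal{M}$ of a finite tuple from $\operatorname{dom} f$ together with the $g$-images of a finite tuple from $\operatorname{dom} g$ is true in $\mathcal{N}$ of the corresponding $f$-images and states (this holds at the start thanks to the compatibility of the distinguished tuples). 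There are two kinds of step. To put a fresh point $x^\ast$ of $\mathcal{M}$ into $\operatorname{dom} f$, one realizes in the saturated $\mathcal{N}$ the type recording which flow formulas should hold of the image of $x^\ast$ over the parameters handled so far; this type is finitely satisfiable because an existential quantifier over a point, applied to a finite conjunction of flow formulas, is again a flow formula, so the invariant transports the requisite witness into $\mathcal{N}$. To put a fresh state $b^\ast$ of $\mathcal{N}$ into $\operatorname{dom} g$, one realizes in the saturated $\mathcal{M}$ the type consisting of the \emph{negations} of the flow formulas that fail of $b^\ast$ over the parameters handled so far; this type is finitely satisfiable because, were a finite part of it unsatisfiable, a universal quantifier over a state applied to the corresponding finite disjunction of flow formulas --- again a flow formula --- would by the invariant carry over to $\mathcal{N}$ and then contradict the choice of that finite part. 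Once the construction is finished, applying the invariant to the atomic flow formulas $R(v^\mathsf{p},v^\mathsf{s})$ and $\lnot R(v^\mathsf{p},v^\mathsf{s})$ gives exactly the adjointness condition of Definition~\ref{def:chut}, so $(f,g)$ is a Chu transform. Since $\mathcal{M}\models\varphi[\bm{m}^\mathsf{p},g(\bm{n}^\mathsf{s})]$, hypothesis (1) forces $\mathcal{N}\models\psi[f(\bm{m}^\mathsf{p}),\bm{n}^\mathsf{s}]$, that is $\mathcal{N}\models\psi[\bm{n}^\mathsf{p},\bm{n}^\mathsf{s}]$, a contradiction.

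I expect the main obstacle to be this back-and-forth step. It works only because the asymmetry of a Chu transform --- a function going \emph{forward} on points but \emph{backward} on states --- mirrors the asymmetry deliberately built into the definition of flow formula, where existential quantification is permitted only over points and universal quantification only over states. Checking that these two asymmetries line up, so that each of the two types of extension step produces a finitely satisfiable type, is the delicate part; the compactness arguments and the passage to saturated (or chain) models around it are routine.
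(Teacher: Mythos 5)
Your proposal is correct in substance, but note that the paper does not prove Theorem~\ref{the-vb} at all: it is quoted from van Benthem, with Feferman's proof (via saturated model pairs) and Otto's proof (via the interpolation Theorem~\ref{th-otto}, as in Corollary~\ref{the-universalpreservation}) cited as alternatives. What you have written is essentially the model-theoretic route, and it coincides almost step for step with the paper's own proof of its dense analogue, Theorem~\ref{theorem:preservationdense} together with Lemma~\ref{lemma:sat}: the set $\Phi$ of flow consequences of $\varphi$, the consistency of $\varphi$ plus the negated flow formulas failing at $(\bm{n}^\mathsf{p},\bm{n}^\mathsf{s})$, and the two back-and-forth steps in which an existential point quantifier (for extending $f$) and a universal state quantifier (for extending $g$) keep the relevant types finitely satisfiable. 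Your adaptation is the right one: without density there is no $\lnot\exists v^\mathsf{p}R(v^\mathsf{p},v^\mathsf{s})$ clause to preserve, and adjointness falls out of the atomic cases $R$ and $\lnot R$ exactly as in Lemma~\ref{lemma:sat}. Two points deserve tightening. First, ``sufficiently saturated elementary extensions'' hides a circularity: since $f$ must be defined on \emph{all} points of $\mathcal{M}$ and $g$ on \emph{all} states of $\mathcal{N}$, each model must realize types over parameter sets as large as the other, so you need saturated (or special) models of the same cardinality, which is not available outright in ZFC; the clean fix --- and what the paper does --- is to first apply downward L{\"o}wenheim--Skolem to get countable structures and then take a countable recursively saturated model pair via Theorem~\ref{theorem:bs}, running the back-and-forth along enumerations of the points of $\mathcal{M}$ and the states of $\mathcal{N}$ (your elementary-chain alternative also works, and was in fact the authors' original method for Theorem~\ref{theorem:preservationdense}). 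Second, you should say a word about well-definedness: if two coordinates of $\bm{m}^\mathsf{p}$ coincide the corresponding coordinates of $\bm{n}^\mathsf{p}$ must coincide, and dually for states; this is exactly what the atomic flow formulas $(v^\mathsf{p}_0=v^\mathsf{p}_1)$ and $\lnot(v^\mathsf{s}_0=v^\mathsf{s}_1)$ give you through your invariant, as the paper notes in the corresponding step of Lemma~\ref{lemma:sat}.
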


Setting $\varphi$ equal to $\psi$ in the above theorem gives that an $\mathcal{L}^\mathrm{II}_{\omega,\omega}$-formula is preserved under Chu transforms if and only if it is logically equivalent to a first-order flow formula.

\subsection{Universal-flow formulas and preservation under surjective Chu transforms}

In this subsection we focus on surjective Chu transforms, which were considered in \cite{MR1830523}. Discussing further ramifications of his Theorem~\ref{the-vb} above, van Benthem makes an informal remark \cite[\S 7.4]{MR1830523}: ``if we know that the $f$-map in a Chu transform is \emph{surjective}, then we can add \emph{universal object quantifiers} $\forall a$ in the construction of flow formulas''. We formalize his remark by introducing the class of universal-flow formulas (Definition~\ref{def:universal-flow}) and then proving the corresponding preservation result (Proposition~\ref{prop:surjective}).

\begin{definition}\label{def:universal-flow}
The \emph{universal-flow formulas} are defined recursively as follows:
\begin{itemize}
\item $(v^\mathsf{p}_0=v^\mathsf{p}_1)$, $\lnot(v^\mathsf{s}_0=v^\mathsf{s}_1)$, $R(v^\mathsf{p},v^\mathsf{s})$, and $\lnot R(v^\mathsf{p},v^\mathsf{s})$ are atomic universal-flow formulas;
\item infinitary conjunctions and disjunctions of universal-flow formulas are uni\-versal-flow formulas;
\item if $\varphi$ is a universal-flow formula, then $\exists\bm{v}^\mathsf{p}\varphi$, $\forall\bm{v}^\mathsf{p}\varphi$, and $\forall\bm{v}^\mathsf{s}\varphi$ are universal-flow formulas.
\end{itemize}
\end{definition}

\begin{proposition}\label{prop:surjective}
All universal-flow formulas are preserved under surjective Chu transforms.
\end{proposition}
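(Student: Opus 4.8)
The plan is to proceed by induction on the complexity of a universal-flow formula $\varphi$, exactly parallelling the proof of Proposition~\ref{proposition:vb} and only adding the new clause for $\forall\bm{v}^\mathsf{p}$. So fix a surjective Chu transform $(f,g)$ from $\langle X,r,A\rangle$ to $\langle Y,s,B\rangle$, meaning $f$ is onto $Y$. For the atomic cases $(v^\mathsf{p}_0=v^\mathsf{p}_1)$, $\lnot(v^\mathsf{s}_0=v^\mathsf{s}_1)$, $R(v^\mathsf{p},v^\mathsf{s})$, $\lnot R(v^\mathsf{p},v^\mathsf{s})$, for infinitary conjunctions and disjunctions, for $\exists\bm{v}^\mathsf{p}$, and for $\forall\bm{v}^\mathsf{s}$, the arguments are verbatim those given in the proof of Proposition~\ref{proposition:vb} (surjectivity is not needed there), so I would simply cite that proof rather than repeat it.

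The one genuinely new step is the clause for $\forall\bm{v}^\mathsf{p}$, and this is where surjectivity of $f$ is used. Suppose $\langle X,r,A\rangle\models\forall\bm{v}^\mathsf{p}\,\varphi[\bm{v}^\mathsf{p},\bm{x},g(\bm{b})]$ for some $\bm{x}$ in $X$ and $\bm{b}$ in $B$; we must show $\langle Y,s,B\rangle\models\forall\bm{v}^\mathsf{p}\,\varphi[\bm{v}^\mathsf{p},f(\bm{x}),\bm{b}]$. Let $\bm{y}$ be an arbitrary tuple in $Y$. Because $f$ is surjective, we may pick a tuple $\bm{x}'$ in $X$ (of the same length) with $f(\bm{x}') = \bm{y}$, coordinate by coordinate. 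The hypothesis applied to $\bm{x}'$ gives $\langle X,r,A\rangle\models\varphi[\bm{x}',\bm{x},g(\bm{b})]$, and the induction hypothesis for $\varphi$ then yields $\langle Y,s,B\rangle\models\varphi[f(\bm{x}'),f(\bm{x}),\bm{b}]$, i.e. $\langle Y,s,B\rangle\models\varphi[\bm{y},f(\bm{x}),\bm{b}]$. Since $\bm{y}$ was arbitrary, $\langle Y,s,B\rangle\models\forall\bm{v}^\mathsf{p}\,\varphi[\bm{v}^\mathsf{p},f(\bm{x}),\bm{b}]$, as required.

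There is no real obstacle here; the proof is a routine induction. The only point worth a moment's care is the choice of preimages for an \emph{infinite} tuple $\bm{y}$ under $f$: this requires the axiom of choice (or at least enough of it), but that is uncontroversial in this infinitary setting, and for the first-order fragment $\mathcal{L}^{\mathrm{II}}_{\omega,\omega}$ only finite tuples occur and no choice is needed. It is also worth noting in passing that, unlike in Proposition~\ref{proposition:vb}, we genuinely need $f$ surjective: if some $y\in Y$ is not in the image of $f$, then $\forall\bm{v}^\mathsf{p}$-statements true downstairs can fail at $y$, which is exactly why van Benthem's informal remark requires surjectivity.
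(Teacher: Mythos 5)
Your proof is correct and follows essentially the same route as the paper's: an induction on complexity reusing the cases of Proposition~\ref{proposition:vb} verbatim, with surjectivity of $f$ invoked exactly once to pull back an arbitrary tuple $\bm{y}$ in the new $\forall\bm{v}^\mathsf{p}$ clause. Your side remark about choice for infinite tuples is a fine (optional) addition but does not change the argument.
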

\begin{proof}
We follow the proof of Proposition~\ref{proposition:vb}, using the induction on the complexity of a given universal-flow formula $\varphi$. The new case is to suppose that  $\langle X,r,A\rangle\models\forall\bm{v}^\mathsf{p}\varphi[\bm{v}^\mathsf{p},\bm{x},g(\bm{b})]$ for some $\bm{x}$ in $X$ and $\bm{b}$ in $B$. If $\bm{y}$ is in $Y$, by surjectivity there exists $\bm{x}'\in X$ such that $f(\bm{x}')=\bm{y}$. We then have that $\langle X,r,A\rangle\models\varphi[\bm{x}',\bm{x},g(\bm{b})]$. This implies inductively that $\langle Y,s,B\rangle\models\varphi[\bm{y},f(\bm{x}),\bm{b}]$, from which we obtain $\langle Y,s,B\rangle\models\forall\bm{v}^\mathsf{p}\varphi[\bm{v}^\mathsf{p},f(\bm{x}),\bm{b}]$.
\end{proof}

In analogy with Theorem~\ref{the-vb}, we aim to prove that if an $\mathcal{L}^\mathrm{II}_{\omega,\omega}$-formula is preserved by all surjective Chu transforms, then it must be logically equivalent to a first-order universal-flow formula. Our proof is an application of a general interpolation theorem by Otto~\cite{MR1814123}, which we state as Theorem~\ref{th-otto}.

Otto's theorem is in turn a strengthening of Lyndon's interpolation theorem \cite{MR0106825} and uses the same notation of positive and negative occurrences of a symbol in a formula. For that definition, we do not allow connectives $\rightarrow$ and $\leftrightarrow$, which anyway do not figure in our definition of $\mathcal{L}^\mathrm{II}_{\infty,\infty}$-formulas. For a symbol $S$ of a language $\mathcal{L}$ and a formula $\varphi$, we say that $S$ occurs \emph{positively} in $\varphi$ if $S$ has an occurrence within $\varphi$ which is in the scope of an even number of negation symbols. The symbol $S$ occurs \emph{negatively} in $\varphi$ if $S$ has an occurrence within $\varphi$ which is in the scope of an odd number of negation symbols. Note that a symbol can occur both positively and negatively in a formula.\footnote{We can see that the connectives $\rightarrow$ and $\leftrightarrow$ are excluded since they introduce hidden negation symbols that can change the parity of an occurrence.}

Suppose that $\mathbb{U}$ is a tuple of designated unary predicates and $\varphi$ a formula. We say that $\varphi$ is \emph{$\mathbb{U}$-relativized} if each
quantifier in $\varphi$ is bounded by a predicate in $\mathbb{U}$, namely it is of the form $\exists x(U(x)\land\dots)$ or $\forall x(\lnot U(x)\lor\dots)$, for
some $U\in\mathbb{U}$.

\begin{theorem}[{Otto~\cite[Theorem~1]{MR1814123}}]\label{th-otto}
Let $\mathcal{L}$ be a finite relational language with $\top$ and $\bot$ as atomic formulas, and $\mathbb{U}$ a tuple of designated unary predicates. Suppose that $\varphi$ and $\psi$ are two $\mathbb{U}$-relativized first-order formulas such that $\varphi\vdash\psi$. Then there is a $\mathbb{U}$-relativized first-order formula $\chi$ such that:
\begin{enumerate}
\item Every free variable of $\chi$ is free in both $\varphi$ and $\psi$.
\item\label{otto:pos} Every relation symbol of $\mathcal{L}$ which occurs positively in $\chi$, occurs positively in both $\varphi$ and $\psi$.
\item\label{otto:neg} Every relation symbol of $\mathcal{L}$ which occurs negatively in $\chi$, occurs negatively in both $\varphi$ and $\psi$.
\item $\varphi\vdash\chi$ and $\chi\vdash\psi$.
\end{enumerate}

The statement of the theorem has two readings, one for first-order logic with equality and another for first-order logic without equality.
\end{theorem}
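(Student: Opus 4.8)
The plan is to prove this model-theoretically, adapting the back-and-forth argument that underlies the usual proof of Lyndon's interpolation theorem~\cite{MR0106825} so that the polarity constraints \emph{and} the relativization are both maintained throughout. A tempting shortcut --- apply Lyndon's theorem to get an interpolant $\chi_0$ with the right polarities and free variables, and then relativize it to $\mathbb{U}$ --- does not work, because relativizing a quantifier by the disjunction of the members of $\mathbb{U}$ introduces every $U \in \mathbb{U}$ both positively and negatively, which may violate the positivity and negativity conditions (for instance when some $U \in \mathbb{U}$ occurs in $\varphi$ only under existential bounds and in $\psi$ only under universal ones). So the relativization must be woven into the proof, which forces one to keep exact track of which members of $\mathbb{U}$ may legitimately appear, and with which polarity, at each stage.

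After the routine reduction to sentences (replace the free variables common to $\varphi$ and $\psi$ by fresh constant symbols, which are polarity-neutral and so leave the positivity and negativity conditions untouched; the free-variable condition is recovered at the end by undoing the substitution), the first substantive step is a normalization. Since $\varphi$ and $\psi$ are $\mathbb{U}$-relativized, their truth in an $\mathcal{L}$-structure $\mathcal{M}$ is already determined by the substructure $\mathcal{M}^\sharp$ that $\mathcal{M}$ induces on $\bigcup_{U\in\mathbb{U}}U^{\mathcal{M}}$ together with the designated constants; in particular, passing to $\mathcal{M}^\sharp$ preserves both $\varphi$ and $\lnot\psi$. We may therefore restrict attention to structures in which $\bigcup_{U\in\mathbb{U}}U^{\mathcal{M}}$ is all of the universe apart from those constants. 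This is what will let us place a witness demanded by $\varphi$ inside the common part without asserting its membership in any particular $U \in \mathbb{U}$, and hence without incurring an inadmissible positive occurrence of that $U$.

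The core of the argument assumes that no interpolant $\chi$ as in the statement exists, and from this derives a contradiction with $\varphi \vdash \psi$ by the back-and-forth that underlies Lyndon's theorem, now carried out in the relativized setting. It runs a ``$\varphi$-side'' and a ``$\lnot\psi$-side'' in parallel, linked by a partial map that is a Lyndon homomorphism compatible with $\mathbb{U}$: on the relation symbols occurring positively in both $\varphi$ and $\psi$ it goes forward, on those occurring negatively in both it goes backward, it transports membership in each $U \in \mathbb{U}$ in the direction licensed by the polarity of that $U$, it fixes the designated constants, and --- in the reading with equality --- it is injective. At each stage the finite local data to be added on one side is described by a $\mathbb{U}$-relativized formula that uses only such symbols (call such a formula \emph{admissible}), and the assumed nonexistence of an interpolant is precisely what guarantees that the other side can always meet the resulting demand, so the construction never stalls; finiteness of the relational language keeps these local descriptions first-order, and $\top$ and $\bot$ handle the degenerate endpoints. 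In the limit, the compatibility of the completed map lets one read off a single structure that satisfies both $\varphi$ and $\lnot\psi$, which is the desired contradiction. The equality-free reading is the same argument with the injectivity requirements dropped.

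I expect the main obstacle to be exactly this bookkeeping inside the back-and-forth step: one must maintain, for every $U \in \mathbb{U}$ and every relation symbol $R$, the direction in which membership and incidence facts may be transported, and verify that the witnessing formula produced at each stage is admissible. The delicate case is when $\varphi$, being $\mathbb{U}$-relativized, forces an existential witness inside some $U \in \mathbb{U}$ whose positive occurrence would be inadmissible; this is where the normalization of the second paragraph has to be invoked, so that the witness can be placed without committing to $U$, and it is the point at which the interplay between Otto's relativization and the Lyndon polarity constraints is genuinely constrained. Making precise the sense in which a $\mathbb{U}$-relativized sentence is determined by $\mathcal{M}^\sharp$, and running the whole construction uniformly for the two readings of the theorem, are the subsidiary points that require care.
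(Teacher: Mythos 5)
The paper itself offers no proof of this statement: it is quoted verbatim as Otto's interpolation theorem \cite[Theorem~1]{MR1814123} and used as a black box (in Corollary~\ref{the-universalpreservation} and Theorem~\ref{th:regressive=projective-flow}), so there is nothing in-paper to compare your attempt against. Judged on its own terms, your outline follows the same general strategy as Otto's original model-theoretic argument---assume no admissible ($\mathbb{U}$-relativized, polarity-respecting) interpolant exists, link a model of $\varphi$ with a model of $\lnot\psi$ by a partial correspondence that transports relations and the predicates of $\mathbb{U}$ only in the directions licensed by polarity, run a back-and-forth, and extract a contradiction with $\varphi\vdash\psi$---and your opening observation that one cannot simply relativize a Lyndon interpolant after the fact is exactly the right reason the relativization must be woven into the construction.

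As a proof, however, it has genuine gaps. First, the nonexistence of an interpolant does not by itself guarantee that ``the other side can always meet the resulting demand'' at each stage; what it yields, via a compactness/separation argument you never state, is only one pair of models $\mathcal{A}\models\varphi$ and $\mathcal{B}\models\lnot\psi$ such that every admissible sentence true in $\mathcal{A}$ holds in $\mathcal{B}$. To keep the back-and-forth from stalling you must in addition choose these models suitably saturated (e.g.\ $\omega$-saturated, or a recursively saturated model pair---the very device this paper uses in Lemma~\ref{lemma:sat}); conflating the separation step with the extension step hides the main body of the argument. Second, the ``limit'' step---reading off a single structure satisfying $\varphi\land\lnot\psi$ from the completed correspondence---is precisely the amalgamation where one must decide how to interpret the symbols that are not admissible (those occurring in only one of $\varphi$, $\psi$, or with mismatched polarity), check that $\varphi$ and $\lnot\psi$ survive the transfer, and do this separately for the with-equality reading (where injectivity of the correspondence matters) and the equality-free reading; none of this is carried out, and the free variables occurring in only one of the two formulas also need explicit handling beyond your substitution of constants for the common ones. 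Until the separation lemma, the saturation hypothesis, and the amalgamation are made precise, what you have is a plausible plan in the spirit of Otto's proof rather than a proof.
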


Applying the version of Theorem~\ref{th-otto} without equality, Otto~\cite[Theorem~14]{MR1814123} gave an alternative proof of van Benthem's Theorem~\ref{the-vb}. From that application, we shall now deduce a preservation theorem for first-order universal-flow formulas.

\begin{corollary}\label{the-universalpreservation}
Suppose $\varphi(\bm{v}^\mathsf{p},\bm{v}^\mathsf{s})$ is an $\mathcal{L}^\mathrm{II}_{\omega,\omega}$-formula which is preserved under surjective Chu transforms. Then there exists a first-order universal-flow formula $\theta(\bm{v}^\mathsf{p},\bm{v}^\mathsf{s})$ such that $\varphi$ and $\theta$ are logically equivalent and, if $R$ occurs positively (negatively) in $\theta$, then $R$ occurs positively (negatively) in $\varphi$.
\end{corollary}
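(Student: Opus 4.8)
The plan is to adapt Otto's interpolation-based proof of Theorem~\ref{the-vb}, adding a surjectivity axiom to the encoding. Work in a finite single-sorted relational language with unary predicates $\mathsf{P}_1,\mathsf{A}_1$ and $\mathsf{P}_2,\mathsf{A}_2$ (points and states of a ``source'' and a ``target''), binary symbols $R_1,R_2$ for the two Chu relations and $\mathsf{F},\mathsf{G}$ for the graphs of $f$ and $g$, and $\top,\bot$; set $\mathbb{U}=\langle\mathsf{P}_1,\mathsf{A}_1,\mathsf{P}_2,\mathsf{A}_2\rangle$. For a two-sorted formula $\psi$ and $i\in\{1,2\}$, let $\psi^{[i]}$ be the $\mathbb{U}$-relativization sending point variables to $\mathsf{P}_i$-bounded ones, state variables to $\mathsf{A}_i$-bounded ones, and $R$ to $R_i$; it is a $\mathbb{U}$-relativized first-order formula with the same free variables in which $R_i$ occurs positively (resp.\ negatively) exactly when $R$ does in $\psi$. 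Let $\sigma$ be a $\mathbb{U}$-relativized first-order sentence expressing that $\mathsf{F},\mathsf{G}$ are graphs of functions $f\colon\mathsf{P}_1\to\mathsf{P}_2$ and $g\colon\mathsf{A}_2\to\mathsf{A}_1$ forming a \emph{surjective} Chu transform; crucially, write $\sigma$ so that $\mathsf{A}_2$ occurs in it only negatively, which is possible because it suffices to require $g$ functional, total on $\mathsf{A}_2$, and $\mathsf{A}_1$-valued (not that its graph lie inside $\mathsf{A}_2\times\mathsf{A}_1$), and to phrase adjointness as $\forall x\forall b\forall c\bigl((\mathsf{P}_1x\land\mathsf{G}bc)\rightarrow(R_1xc\leftrightarrow\exists y(\mathsf{F}xy\land R_2yb))\bigr)$; the symbol $R_1$ is not polarity-controlled in $\sigma$, which is harmless since only $R_2$ matters for us.

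For $\varphi(\bm{v}^\mathsf{p},\bm{v}^\mathsf{s})$, put
\[
\Phi:=\sigma\land\exists\bm{x}^\mathsf{p}\exists\bm{z}^\mathsf{s}\Bigl(\bigwedge_i(\mathsf{P}_1x^\mathsf{p}_i\land\mathsf{F}x^\mathsf{p}_iv^\mathsf{p}_i)\land\bigwedge_j(\mathsf{A}_1z^\mathsf{s}_j\land\mathsf{G}v^\mathsf{s}_jz^\mathsf{s}_j)\land\varphi^{[1]}(\bm{x}^\mathsf{p},\bm{z}^\mathsf{s})\Bigr),
\]
\[
\Psi:=\Bigl(\bigwedge_i\mathsf{P}_2v^\mathsf{p}_i\land\bigwedge_j\mathsf{A}_2v^\mathsf{s}_j\Bigr)\rightarrow\varphi^{[2]}(\bm{v}^\mathsf{p},\bm{v}^\mathsf{s}).
\]
These are $\mathbb{U}$-relativized first-order formulas with free variables among $\bm{v}^\mathsf{p},\bm{v}^\mathsf{s}$. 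Unwinding Definitions~\ref{def:chut}, \ref{def:surj} and \ref{def:preservation} shows that $\varphi$ is preserved under surjective Chu transforms if and only if $\Phi\vdash\Psi$: a surjective Chu transform together with witnesses for $\varphi$ on the source yields a model of $\Phi$, hence of $\Psi$, hence of $\varphi^{[2]}$ at the relevant assignment; conversely, a model $M$ of $\Phi$ produces a surjective Chu transform (with target state set $\mathsf{A}_2^M$ and $g$ the restriction of $\mathsf{G}^M$ to $\mathsf{A}_2^M$), and either $\bm{v}^\mathsf{s}$ does not land in $\mathsf{A}_2^M$, making $\Psi$ vacuously true, or preservation applies and gives $M\models\varphi^{[2]}$. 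Writing the sort-guard of $\Psi$ as an antecedent rather than a conjunction is what keeps $\mathsf{A}_2$ occurring only negatively in $\Phi$ (it does not occur there positively even through the free variable $\bm{v}^\mathsf{s}$), the polarity we shall exploit.

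Apply Otto's Theorem~\ref{th-otto} to $\Phi\vdash\Psi$ to obtain a $\mathbb{U}$-relativized first-order $\chi(\bm{v}^\mathsf{p},\bm{v}^\mathsf{s})$ with $\Phi\vdash\chi\vdash\Psi$ satisfying the interpolation constraints. Since the only relation symbols shared by $\Phi$ and $\Psi$ are $R_2,\mathsf{P}_2,\mathsf{A}_2$, the formula $\chi$ contains none of $R_1,\mathsf{F},\mathsf{G},\mathsf{P}_1,\mathsf{A}_1$, so every quantifier of $\chi$ is bounded by $\mathsf{P}_2$ or by $\mathsf{A}_2$. As $\mathsf{A}_2$ occurs only negatively in $\Phi$, it occurs only negatively in $\chi$, so (passing to negation normal form, which does not change polarities) every $\mathsf{A}_2$-bounded quantifier of $\chi$ is universal. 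Moreover $R_2$ occurs positively (resp.\ negatively) in $\chi$ only if it does so in $\Psi$, hence only if $R$ does so in $\varphi$. Reading $\mathsf{P}_2$-bounded quantifiers of $\chi$ as point quantifiers, $\mathsf{A}_2$-bounded ones as (universal) state quantifiers, and $R_2$ as $R$, therefore yields a two-sorted first-order universal-flow formula $\theta(\bm{v}^\mathsf{p},\bm{v}^\mathsf{s})$ with the required polarity property for $R$; equality between variables of a single sort is incorporated exactly as in the proofs of Theorem~\ref{the-vb}, using the equality-free reading of Theorem~\ref{th-otto}, so that point-equality survives only positively and state-equality only negatively. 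Finally, $\theta\equiv\varphi$ follows by running the construction on a bijective copy of an arbitrary Chu space $\langle X,r,A\rangle$: in the corresponding model $\Phi$, $\varphi^{[2]}$ and $\chi$ compute $\varphi$ and $\theta$ on $\langle X,r,A\rangle$, so $\Phi\vdash\chi$ gives $\varphi\vdash\theta$ and $\chi\vdash\Psi$ gives $\theta\vdash\varphi$.

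The hard part is the polarity bookkeeping that confines the interpolant to the universal-flow fragment: one must design $\sigma$ and $\Psi$ so that $\mathsf{A}_2$ never appears positively in $\Phi$ --- otherwise $\chi$ could acquire an existential state quantifier, which universal-flow formulas lack --- while checking that the resulting $\Phi,\Psi$ still capture preservation under surjective Chu transforms exactly, which forces some care about the domain of $\mathsf{G}$ and the treatment of the free state variables. The surjectivity hypothesis enters only through the surjectivity clause of $\sigma$, which makes $\mathsf{P}_2$ occur with both polarities and so permits the interpolant (and hence $\theta$) to carry universal point quantifiers; deleting that clause recovers precisely Otto's argument for Theorem~\ref{the-vb}.
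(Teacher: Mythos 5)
Your encoding-and-interpolation strategy is sound in its main lines, but it is a genuinely different route from the paper's. The paper never encodes the surjective transform itself: it notes that restrictions in the second sort are in particular surjective, so your $\varphi$ is preserved under them, and then quotes the analysis inside Otto's proof of his Theorem~14 to conclude that the relativized $\varphi^{\mathbb{U}}$ is equivalent to a $\mathbb{U}$-relativized $\chi$ in which the state predicate occurs only negatively; the extra strength of surjectivity (non-injectivity of $f$ and $g$) is used only in a preliminary step, via Remark~\ref{why-not-equal}, to discard the atoms $\lnot(v^\mathsf{p}_0=v^\mathsf{p}_1)$ and $(v^\mathsf{s}_0=v^\mathsf{s}_1)$. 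You instead express the whole preservation statement as a single entailment $\Phi\vdash\Psi$ over two coded Chu spaces plus the graphs of $f$ and $g$, and read the universal-flow shape off Theorem~\ref{th-otto} directly: your equivalence ``preserved under surjective Chu transforms iff $\Phi\vdash\Psi$'', the arrangement of $\sigma$ so that $\mathsf{A}_2$ occurs only negatively in $\Phi$, and the transfer of the $R$-polarity constraint through $\Psi$ are all correct, and this version has the merit of being self-contained and of making visible exactly where surjectivity enters (it puts $\mathsf{P}_2$ into both polarities, freeing the point quantifiers), at the cost of redoing bookkeeping the paper simply inherits from Otto.

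The one step that does not work as written is the treatment of equality. You cannot appeal to ``the equality-free reading of Theorem~\ref{th-otto}'': your $\sigma$ needs equality (functionality of $\mathsf{F}$ and $\mathsf{G}$) and $\varphi$ itself may contain equality atoms; and in the reading with equality, Otto's polarity clauses do not constrain ``$=$'' at all, nor could any polarity condition on a single untyped equality symbol yield what the universal-flow grammar demands, namely point-equality only positive and state-equality only negative. So, as set up, your interpolant $\chi$ may contain $\lnot(v^\mathsf{p}_0=v^\mathsf{p}_1)$ or $(v^\mathsf{s}_0=v^\mathsf{s}_1)$ and its two-sorted rewriting $\theta$ would fall outside the universal-flow fragment. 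You need an additional device: either first eliminate those atoms from $\varphi$ using non-injectivity of $f$ and $g$ for surjective transforms (this is exactly the paper's preliminary step, and admittedly the paper is equally brisk there), or replace target-sort equalities by fresh binary predicates axiomatized inside $\Phi$ with polarities you control. A smaller cosmetic point: the rewriting of $\chi$ produces $\top$, $\bot$ and wrong-sort atoms, which the universal-flow grammar does not list, so a sentence explaining how these are absorbed is needed as well.
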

\begin{proof}
As a preliminary step, as noted in Remark~\ref{why-not-equal}, 
$\mathcal{L}^\mathrm{II}_{\omega,\omega}$-atomic formulas of the type $\lnot(v^\mathsf{p}_0=v^\mathsf{p}_1)$ are not preserved unless $f$ is injective, which is not required for surjective Chu transforms. Hence, up to logical equivalence, $\varphi$ does not contain any occurrences of such atomic formulas. Similarly for atomic formulas of the type $(v^\mathsf{s}_0=v^\mathsf{s}_1)$, which are only preserved if $g$ is injective.

Next, following the proof of \cite[Theorem~14]{MR1814123}, let $\mathbb{U}=\langle U,V\rangle$ consist of two unary predicates for the points sort and the states sort, respectively. Let $\varphi^\mathbb{U}$ be obtained from $\varphi$ by relativizing each quantifier accordingly. Otto shows that, if $\varphi$ is preserved under restrictions in the second sort (which is certainly true for our $\varphi$), then $\varphi^\mathbb{U}$ is logically equivalent to a $\mathbb{U}$-relativized formula $\chi$ in which $V$ occurs only negatively, and each free variable of $\chi$ is free in $\varphi^\mathbb{U}$.

The fact that $\chi$ is $\mathbb{U}$-relativized implies that $\chi$ has a rewriting as an $\mathcal{L}^\mathrm{II}_{\omega,\omega}$-formula: let us call this rewriting $\theta(\bm{v}^\mathsf{p},\bm{v}^\mathsf{s})$. Then, the fact that $V$ occurs only negatively in $\chi$ means that $\theta$ does not admit existential quantification over the sort corresponding to $V$, which is exactly how we defined universal-flow formulas. Finally, the additional syntactic information on positive and negative occurrences of $R$ in $\theta$ can be deduced from points \eqref{otto:pos} and \eqref{otto:neg} of Theorem~\ref{th-otto}, as in the proof of \cite[Proposition~11]{MR1814123}.
\end{proof}

\subsection{Regressive-flow formulas and preservation under restrictions in the second sort}

Gemignani~\cite{MR0235505} considered topological properties preserved by continuous functions. He did not define what a ``property'' is but, for our purpose, we shall take it to mean any $\mathcal{L}^\mathrm{II}_{\infty,\infty}$-sentence. 

We start by recalling Gemignani's definition of a regressive property and giving its generalization to the language of Chu spaces. Notice that, when specialized to topological spaces, our definition coincides with Gemignani's.

\begin{definition}\leavevmode
\begin{enumerate}
\item (Gemignani~\cite{MR0235505}) A property $P$ is \emph{regressive} iff for every topological space $\langle X,\tau\rangle$ and every coarser topology $\tau'\subseteq\tau$ on $X$, if $\langle X,\tau\rangle$ satisfies $P$ then also $\langle X,\tau'\rangle$ satisfies $P$.
\item An $\mathcal{L}^\mathrm{II}_{\infty,\infty}$-sentence $\varphi$ is \emph{regressive} iff it is preserved under restrictions in the second sort.
\end{enumerate}
\end{definition}

In Theorem~\ref{th:regressive=projective-flow}, we shall characterize first-order regressive sentences as a class extending the flow formulas, as introduced in the following definition.

\begin{definition}
The \emph{regressive-flow formulas} are defined recursively as follows:
\begin{itemize}
\item all atomic $\mathcal{L}^\mathrm{II}_{\infty,\infty}$-formulas are atomic regressive-flow formulas;
\item infinitary conjunctions and disjunctions of regressive-flow formulas are re\-gressive-flow formulas;
\item if $\varphi$ is a regressive-flow formula, $\exists\bm{v}^\mathsf{p}\varphi$, $\forall\bm{v}^\mathsf{p}\varphi$, and $\forall\bm{v}^\mathsf{s}\varphi$ are regressive-flow formulas.
\end{itemize}
\end{definition}

We notice that the regressive-flow formulas are essentially the same as the universal-flow formulas of Definition~\ref{def:universal-flow}. The only difference is the use of equality and non-equality in the atomic case, which is now unrestricted.

\begin{proposition}\label{prop:preserve-projective-flow}
All regressive-flow formulas are preserved under restrictions in the second sort.
\end{proposition}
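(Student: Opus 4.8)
The plan is to argue by induction on the complexity of a regressive-flow formula $\varphi$, running essentially the same induction as in the proofs of Proposition~\ref{proposition:vb} and Proposition~\ref{prop:surjective}. Fix a restriction in the second sort, that is, a Chu transform $(f,g)=(\mathrm{id}_X,\mathrm{id}_B)$ from $\langle X,r,A\rangle$ to $\langle X,s,B\rangle$ with $B\subseteq A$ and $s=r\cap(X\times B)$; recall that such a transform is in particular surjective. Since the atomic regressive-flow formulas are the atomic $\mathcal{L}^\mathrm{II}_{\infty,\infty}$-formulas together with their negations, the only clauses that are not already covered by those two proofs are the two new atomic base cases $\lnot(v^\mathsf{p}_0=v^\mathsf{p}_1)$ and $(v^\mathsf{s}_0=v^\mathsf{s}_1)$.

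For all the remaining clauses I would simply transcribe the earlier arguments: the atoms $R(v^\mathsf{p},v^\mathsf{s})$ and $\lnot R(v^\mathsf{p},v^\mathsf{s})$ are handled by the adjointness condition exactly as in Proposition~\ref{proposition:vb}, and so are the atoms $(v^\mathsf{p}_0=v^\mathsf{p}_1)$ and $\lnot(v^\mathsf{s}_0=v^\mathsf{s}_1)$; infinitary conjunctions and disjunctions follow immediately from the induction hypothesis; $\exists\bm{v}^\mathsf{p}$ is handled by transporting a witness along $f$; $\forall\bm{v}^\mathsf{p}$ is handled as in Proposition~\ref{prop:surjective}, using that $f=\mathrm{id}_X$ is surjective; and $\forall\bm{v}^\mathsf{s}$ is handled as in Proposition~\ref{proposition:vb}, the key point being that $g$ maps $B$ into $A$, so that the universal quantifier over states in the source, which ranges over all of $A$, in particular covers the $B$-tuples needed to witness the universal quantifier over states in the target.

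It then remains to treat the two genuinely new atomic cases, and this is immediate: in a restriction in the second sort both $f=\mathrm{id}_X$ and $g=\mathrm{id}_B$ are injective, so for $x_0,x_1\in X$ we have $x_0\neq x_1$ if and only if $f(x_0)\neq f(x_1)$, and for $b_0,b_1\in B$ we have $b_0=b_1$ if and only if $g(b_0)=g(b_1)$; thus $\lnot(v^\mathsf{p}_0=v^\mathsf{p}_1)$ and $(v^\mathsf{s}_0=v^\mathsf{s}_1)$ are preserved, completing the induction. I do not expect a real obstacle here: the whole content is the bookkeeping of inductive clauses already established elsewhere, together with the observation — anticipated in Remark~\ref{why-not-equal} — that injectivity of $f$ and $g$ is exactly what makes the extra equality atoms preserved.
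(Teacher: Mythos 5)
Your proposal is correct and follows essentially the same route as the paper: reduce all inductive clauses to those already handled for (surjective) Chu transforms in Propositions~\ref{proposition:vb} and~\ref{prop:surjective}, and observe that the two new atomic cases $\lnot(v^\mathsf{p}_0=v^\mathsf{p}_1)$ and $(v^\mathsf{s}_0=v^\mathsf{s}_1)$ are preserved because $f$ and $g$ are identity (hence injective) maps. No gaps.
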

\begin{proof}
Since restrictions in the second sort are in particular surjective, the inductive step is the same as the proof of Proposition~\ref{prop:surjective}. Additionally, we only need to check that inequalities of the type $\lnot(v_0^\mathsf{p}= v_1^\mathsf{p})$ and equalities of the type $(v_0^\mathsf{s}= v_1^\mathsf{s})$ are preserved, but this clearly holds by $f$ and $g$ being identity functions and by the logical convention of $=$ being interpreted as the true equality.
\end{proof}

First-order regressive sentences can be characterized by the following theorem.

\begin{theorem}\label{th:regressive=projective-flow}
An $\mathcal{L}^\mathrm{II}_{\omega,\omega}$-sentence is regressive if and only if it is logically equivalent to a first-order regressive-flow sentence.
\end{theorem}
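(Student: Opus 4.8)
The plan is to mirror the proof of Corollary~\ref{the-universalpreservation}, but now invoking the \emph{with-equality} reading of Otto's Theorem~\ref{th-otto} instead of the equality-free one. The backward direction is already handled by Proposition~\ref{prop:preserve-projective-flow}, so the content lies in the forward direction: given an $\mathcal{L}^\mathrm{II}_{\omega,\omega}$-sentence $\varphi$ preserved under restrictions in the second sort, produce a logically equivalent first-order regressive-flow sentence. Since we are allowed unrestricted use of equality in regressive-flow formulas, there is no preliminary cleanup step to perform (unlike in Corollary~\ref{the-universalpreservation}, where atomic formulas of the form $\lnot(v_0^\mathsf{p}=v_1^\mathsf{p})$ and $(v_0^\mathsf{s}=v_1^\mathsf{s})$ had to be argued away).

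First I would set up the translation to single-sorted logic: introduce unary predicates $\mathbb{U}=\langle U,V\rangle$ for the point and state sorts, and let $\varphi^\mathbb{U}$ be the $\mathbb{U}$-relativization of $\varphi$, so that $\varphi^\mathbb{U}$ is a first-order formula over the finite relational language $\{U,V,R\}$. The hypothesis that $\varphi$ is preserved under restrictions in the second sort translates, exactly as in Otto's \cite[Theorem~14]{MR1814123} and its use in the proof of Corollary~\ref{the-universalpreservation}, into a semantic statement of the form $\varphi^\mathbb{U}\vdash(\varphi^\mathbb{U})'$, where $(\varphi^\mathbb{U})'$ is a variant of $\varphi^\mathbb{U}$ living on a structure with a possibly shrunk interpretation of $V$; the point is that $V$ occurs only negatively on the right-hand side (the shrinking of a state set is a ``negative'' operation on $V$). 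Applying Theorem~\ref{th-otto}, in its with-equality reading, yields a $\mathbb{U}$-relativized first-order interpolant $\chi$, logically equivalent to $\varphi^\mathbb{U}$, in which $V$ occurs only negatively, and with every free variable of $\chi$ free in $\varphi^\mathbb{U}$.

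Next I would translate $\chi$ back into a two-sorted $\mathcal{L}^\mathrm{II}_{\omega,\omega}$-sentence $\theta$. Being $\mathbb{U}$-relativized, $\chi$ admits such a rewriting; being a sentence, $\chi$ has no free variables, so $\theta$ is genuinely a sentence. The crucial observation is that $V$ occurring only negatively in $\chi$ means precisely that, after rewriting, $\theta$ has no existential quantifier over the states sort: an existential state quantifier $\exists v^\mathsf{s}$ would, under the relativization convention $\exists x(V(x)\land\dots)$, produce a positive occurrence of $V$. Hence $\theta$ uses only $\exists\bm{v}^\mathsf{p}$, $\forall\bm{v}^\mathsf{p}$, $\forall\bm{v}^\mathsf{s}$ together with arbitrary atomic formulas, infinitary (here finitary) conjunctions and disjunctions, and negations pushed down to the atomic level — which is exactly a first-order regressive-flow sentence. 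Since $\chi$ is logically equivalent to $\varphi^\mathbb{U}$ and relativization is a faithful translation, $\theta$ is logically equivalent to $\varphi$, completing the proof.

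The main obstacle, and the step I would write out most carefully, is the precise formulation of the semantic consequence $\varphi^\mathbb{U}\vdash(\varphi^\mathbb{U})'$ that encodes ``preserved under restrictions in the second sort'' and makes $V$ appear only negatively on the consequent side: one must rename $V$ on the right, add axioms saying the new $V$-set is contained in the old one and that $R$ interacts correctly, and verify that the resulting entailment is equivalent to the preservation hypothesis — this is the technical heart that Otto carries out in \cite[Theorem~14]{MR1814123} for restrictions in the second sort, and which we are reusing. A secondary point to get right is the bookkeeping ensuring that the with-equality interpolant $\chi$ really does rewrite to a \emph{first-order} two-sorted formula with the claimed quantifier shape; but this is a routine syntactic check once the occurrence-of-$V$ condition is in hand, and unlike Corollary~\ref{the-universalpreservation} we need no extra argument about $R$-occurrences since regressive-flow formulas impose no constraint there.
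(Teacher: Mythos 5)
Your proposal is correct and takes essentially the same route as the paper: the paper's proof of Theorem~\ref{th:regressive=projective-flow} simply re-runs the argument of Corollary~\ref{the-universalpreservation} (relativize, apply Otto's Theorem~\ref{th-otto} to get an interpolant in which $V$ occurs only negatively, and rewrite it as a two-sorted sentence), noting that since $f$ and $g$ are now identities the equality and inequality atoms need not be excluded. This matches your observation that no preliminary cleanup step is required and that the with-equality reading of Otto's theorem is the one to use.
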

\begin{proof}
The right-to-left implication is a special case of Proposition~\ref{prop:preserve-projective-flow}. For the other implication, we can use the same argument as for Corollary~\ref{the-universalpreservation}. The only difference is that, since now both $f$ and $g$ are injective, inequalities of the form $\lnot(v^\mathsf{p}_0=v^\mathsf{p}_1)$ and equalities of the form $(v^\mathsf{s}_0=v^\mathsf{s}_1)$ are not ruled out from the atomic case.
\end{proof}

Gemignani~\cite[Proposition~1]{MR0235505} claimed, among other things, that if a regressive property is in addition preserved by continuous bijective functions, then it is preserved by all continuous surjective functions. However, this is false because, for example, the property of not being $T_0$ is regressive and preserved by continuous bijective functions, but is not preserved by continuous surjective functions.

\section{Preservation of compactness and inconsistency-flow formulas}\label{sec:inconflow}

It was shown in \cite{MR2134728} that surjective Chu transforms preserve compactness of abstract logics. Furthermore, in \cite{MR4357456} it was shown that dense Chu transforms preserve $\langle\kappa,\lambda\rangle$-compactness between abstract logics admitting complements and ${<}\kappa$-intersections. Motivated by these results, in this section we give a general preservation theorem for dense Chu transforms. We explain in detail how it applies to the preservation of compactness both in topology and in logic. In particular, we obtain the earlier results as corollaries.

We start with a general definition of compactness for Chu spaces, which follows the one by Paul Alexandroff and Paul Urysohn~\cite{MR0370492} of compactness for topological spaces.

\begin{definition}\label{def:compactness}
Let $\kappa\le\lambda$ be infinite cardinals. A Chu space is \emph{$\langle\kappa,\lambda\rangle$-compact} if it satisfies the following $\mathcal{L}^\mathrm{II}_{{(\lambda^{<\kappa})}^+,\lambda^+}$-sentence:
\begin{equation}\label{for:compactness}
\forall\Seq{v^\mathsf{s}_\alpha}{\alpha<\lambda}\biggl(\exists v^\mathsf{p}\bigwedge_{\alpha<\lambda}\lnot R(v^\mathsf{p},v^\mathsf{s}_\alpha)\ \lor\bigvee_{Z\in{[\lambda]}^{<\kappa}}\forall v^\mathsf{p}\bigvee_{\alpha\in Z}R(v^\mathsf{p},v^\mathsf{s}_\alpha)\biggr).
\end{equation}
We say that a Chu space is \emph{compact} if it is $\langle\aleph_0,\lambda\rangle$-compact for all infinite $\lambda$.
\end{definition}

We observe that Definition~\ref{def:compactness} encompasses both topological and logical compactness as special cases.

\begin{remark}
Let $\langle X,\tau(X)\rangle$ be a topological space; then the following are equivalent:
\begin{itemize}
\item the Chu space $\langle X,\in,\tau(X)\rangle$ is $\langle\kappa,\lambda\rangle$-compact,
\item every open cover of $X$ of cardinality $\le\lambda$ has a subcover of cardinality $<\kappa$.
\end{itemize}
Let $\langle M,\vDash,S\rangle$ be an abstract logic which is closed under negation; then the following are equivalent:
\begin{itemize}
\item the Chu space $\langle M,\vDash,S\rangle$ is $\langle\kappa,\lambda\rangle$-compact,
\item given a set of sentences $T$ of cardinality $\le\lambda$, if each subset of $T$ of cardinality $<\kappa$ has a model, then $T$ itself has a model.
\end{itemize}
The case of topological spaces is clear. For logics which are closed under negation, we first have to note that the corresponding Chu spaces admit complements: for every sentence $\sigma$, there is a negation $\lnot\sigma$ and any model satisfies exactly one of them. Then, to obtain the equivalence, take a sequence $\Seq{\sigma_\alpha}{\alpha<\lambda}$ of sentences and apply Definition~\ref{def:compactness} to  $\Seq{\lnot\sigma_\alpha}{\alpha<\lambda}$.
\end{remark}

The sentence in \eqref{for:compactness} is universal flow. Hence, as a consequence of Proposition~\ref{prop:surjective}, we obtain the following corollary.

\begin{corollary}\label{cor-compact}
Surjective Chu transforms preserve $\langle\kappa,\lambda\rangle$-compactness of Chu spaces.
\end{corollary}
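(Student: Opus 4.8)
The plan is to reduce the statement to Proposition~\ref{prop:surjective} by observing that the defining sentence of $\langle\kappa,\lambda\rangle$-compactness, namely \eqref{for:compactness}, is a universal-flow sentence. Since that sentence has no free variables, the preservation property of Definition~\ref{def:preservation} specializes to the bare implication that if the domain Chu space satisfies \eqref{for:compactness} then so does the codomain; hence ``preserved under surjective Chu transforms'' is exactly the assertion of the corollary, that such transforms carry $\langle\kappa,\lambda\rangle$-compact Chu spaces to $\langle\kappa,\lambda\rangle$-compact ones.

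So the only real work is the syntactic verification, which I would carry out bottom-up through the parse tree of \eqref{for:compactness}. The atomic subformulas occurring in it are exactly $R(v^\mathsf{p},v^\mathsf{s}_\alpha)$ and $\lnot R(v^\mathsf{p},v^\mathsf{s}_\alpha)$, both atomic universal-flow formulas by Definition~\ref{def:universal-flow}; no equalities or inequalities appear, so the restrictions on the atomic case are vacuous. Forming $\bigwedge_{\alpha<\lambda}\lnot R(v^\mathsf{p},v^\mathsf{s}_\alpha)$ and $\bigvee_{\alpha\in Z}R(v^\mathsf{p},v^\mathsf{s}_\alpha)$ uses infinitary conjunction and disjunction, which preserve the class. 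Prefixing $\exists v^\mathsf{p}$ to the first and $\forall v^\mathsf{p}$ to the second is legitimate, since universal-flow formulas are closed under both existential and universal quantification over point variables. Taking the disjunction over $Z\in{[\lambda]}^{<\kappa}$, and then the disjunction of the two resulting disjuncts, again stays inside the class. Finally, the outermost operation is $\forall\Seq{v^\mathsf{s}_\alpha}{\alpha<\lambda}$, a universal quantification over a tuple of state variables, which is the one state-sort quantifier the definition allows. Hence \eqref{for:compactness} is universal flow and Proposition~\ref{prop:surjective} applies directly.

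The ``obstacle'', such as it is, amounts to checking that nothing forbidden sneaks in: the class of universal-flow formulas excludes precisely existential quantification over the states sort and unrestricted negation, so one must confirm that the existential quantifier $\exists v^\mathsf{p}$ in the first disjunct ranges over points (it does) and that every negation symbol stands directly in front of an atomic $R$-formula (it does). Once this is confirmed the corollary is immediate; in particular the fragment to which \eqref{for:compactness} belongs, $\mathcal{L}^\mathrm{II}_{{(\lambda^{<\kappa})}^+,\lambda^+}$, is irrelevant, since Proposition~\ref{prop:surjective} is stated for all universal-flow formulas regardless of the sizes of conjunctions, disjunctions, or quantified tuples.
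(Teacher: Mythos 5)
Your proposal is correct and follows exactly the paper's argument: the paper likewise observes that the sentence \eqref{for:compactness} is a universal-flow sentence and cites Proposition~\ref{prop:surjective}, with your bottom-up syntactic check simply making explicit what the paper leaves to the reader. Nothing further is needed.
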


Concentrating on the sublogic relation, Garc{\'\i}a-Matos and V{\"a}{\"a}n{\"a}nen proved the relevant instance of Corollary~\ref{cor-compact} in \cite[Lemma~2.3]{MR2134728}. Subsequently, D{\v z}amonja and V{\"a}{\"a}n{\"a}nen generalized that result to dense Chu transforms \cite[Corollary~2.6]{MR4357456}. This motivates us to characterize which formulas are preserved by dense Chu transforms and the relation of such formulas with preserving compactness. We naturally connect this with the notion of consistency.

First of all, note that flow formulas can already express that a state is ``consistent'', in symbols $\Con(v^\mathsf{s})$, as
\[
\Con(v^\mathsf{s})\equiv\exists v^\mathsf{p} R(v^\mathsf{p},v^\mathsf{s}).
\] 
In the case of an abstract logic, the formula $\Con[a]$ expresses the satisfiability of the sentence $a$, in the sense of $a$ having a model. In topological spaces, $\Con[a]$ simply means that the open set $a$ is non-empty. However, flow formulas cannot express inconsistency because formulas of the form $\lnot\exists\bm{v}^\mathsf{p}\varphi$ or $\forall\bm{v}^\mathsf{p}\psi$ are not in the class of flow formulas. So we shall define the class of ``inconsistency-flow'' formulas as the formulas which can express not only consistency, but also inconsistency. They are formally defined as follows.

\begin{definition}
The \emph{inconsistency-flow formulas} are defined recursively as follows:
\begin{itemize}
\item $(v^\mathsf{p}_0=v^\mathsf{p}_1)$, $\lnot(v^\mathsf{s}_0=v^\mathsf{s}_1)$, $R(v^\mathsf{p},v^\mathsf{s})$, $\lnot R(v^\mathsf{p},v^\mathsf{s})$, and $\lnot\exists v^\mathsf{p}R(v^\mathsf{p},v^\mathsf{s})$ are atomic inconsistency-flow formulas;
\item infinitary conjunctions and disjunctions of inconsistency-flow formulas are inconsistency-flow formulas;
\item if $\varphi$ is an inconsistency-flow formula, then $\exists\bm{v}^\mathsf{p}\varphi$ and $\forall\bm{v}^\mathsf{s}\varphi$ are inconsistency-flow formulas.
\end{itemize}
\end{definition}

As our aim is to prove that inconsistency-flow formulas are preserved by dense, in particular by surjective Chu transforms, we have defined the class of inconsistency-flow formulas as a subclass of the universal-flow formulas. This holds because universal-flow formulas allow quantification of the form $\forall\bm{v}^\mathsf{p}\varphi$ and so in particular $\forall v^\mathsf{p}\lnot R(v^\mathsf{p},v^\mathsf{s})$, giving us $\lnot\exists v^\mathsf{p}R(v^\mathsf{p},v^\mathsf{s})$.

\begin{proposition}\label{prop:dense}
All inconsistency-flow formulas are preserved under dense Chu transforms.
\end{proposition}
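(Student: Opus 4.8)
The plan is to argue by induction on the complexity of the given inconsistency-flow formula $\varphi$, following exactly the inductive scheme of the proof of Proposition~\ref{proposition:vb}. Fix a dense Chu transform $(f,g)$ from $\langle X,r,A\rangle$ to $\langle Y,s,B\rangle$. All the atomic cases except the new one, namely $(v^\mathsf{p}_0=v^\mathsf{p}_1)$, $\lnot(v^\mathsf{s}_0=v^\mathsf{s}_1)$, $R(v^\mathsf{p},v^\mathsf{s})$ and $\lnot R(v^\mathsf{p},v^\mathsf{s})$, are already flow formulas, so their preservation is handled by Proposition~\ref{proposition:vb}. Likewise the inductive steps for infinitary conjunctions and disjunctions, for $\exists\bm{v}^\mathsf{p}\varphi$, and for $\forall\bm{v}^\mathsf{s}\varphi$ go through verbatim, since density played no role there. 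Hence the only genuinely new verification is the atomic formula $\lnot\exists v^\mathsf{p}R(v^\mathsf{p},v^\mathsf{s})$.

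For that case I would argue by contraposition. Suppose $b\in B$ is such that $\langle Y,s,B\rangle\models\exists v^\mathsf{p}R(v^\mathsf{p},b)$, that is, there is $y\in Y$ with $\langle y,b\rangle\in s$. Density of $(f,g)$ then provides some $x\in X$ with $\langle f(x),b\rangle\in s$, and the adjointness condition of Definition~\ref{def:chut} turns this into $\langle x,g(b)\rangle\in r$, so that $\langle X,r,A\rangle\models\exists v^\mathsf{p}R(v^\mathsf{p},g(b))$. Taking contrapositives, if $\langle X,r,A\rangle\models\lnot\exists v^\mathsf{p}R(v^\mathsf{p},g(b))$ then $\langle Y,s,B\rangle\models\lnot\exists v^\mathsf{p}R(v^\mathsf{p},b)$, which is exactly the required implication, there being no free point variable left to track in this atom.

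There is essentially no hard step: the notion of density was tailored to precisely this situation, so the argument is a one-line use of density followed by adjointness. The only point requiring a little care is the bookkeeping of free variables in the induction — the inconsistency atom binds its point variable, hence when it occurs inside a larger formula it contributes only a free state variable, and the inductive hypothesis must carry along the assignment of the remaining free variables exactly as in Proposition~\ref{proposition:vb}. I would therefore write the proof in the terse form: follow the proof of Proposition~\ref{proposition:vb}, the new atomic case being $\lnot\exists v^\mathsf{p}R(v^\mathsf{p},v^\mathsf{s})$, handled by the density-plus-adjointness argument above.
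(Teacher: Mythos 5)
Your proposal is correct and matches the paper's own proof: the paper likewise reduces everything to the proof of Proposition~\ref{proposition:vb} and treats the single new atomic case $\lnot\exists v^\mathsf{p}R(v^\mathsf{p},v^\mathsf{s})$ by the same density-plus-adjointness contrapositive argument. No gaps to report.
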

\begin{proof}
Suppose $\langle Y,s,B\rangle\models\exists v^\mathsf{p}R(v^\mathsf{p},b)$ for some $b\in B$, which means there exists $y\in Y$ such that $\langle y,b\rangle\in s$. By the density condition, there exists $x\in X$ such that $\langle f(x),b\rangle\in s$, which is equivalent to $\langle x,g(b)\rangle\in r$. This implies that $\langle X,r,A\rangle\models\exists v^\mathsf{p}R(v^\mathsf{p},g(b))$. The rest is similar to Proposition~\ref{proposition:vb}.
\end{proof}

\begin{examples}\leavevmode
\begin{enumerate}
\item For every infinite cardinal $\kappa$, the topological property of having a dense subset of cardinality $\le\kappa$ is expressible by the following $\mathcal{L}^\mathrm{II}_{\kappa^+,\kappa^+}$-inconsistency-flow sentence:
\[
\exists\Seq{v^\mathsf{p}_\alpha}{\alpha<\kappa}\forall v^\mathsf{s}\biggl(\lnot\Con(v^\mathsf{s})\lor\bigvee_{\alpha<\kappa}R(v^\mathsf{p}_\alpha,v^\mathsf{s})\biggr).
\]
\item The topological property of connectedness is expressible by the following $\mathcal{L}^\mathrm{II}_{\omega,\omega}$-inconsistency-flow sentence:
\begin{multline*}
\forall v^\mathsf{s}_0\forall v^\mathsf{s}_1\bigl(\lnot\Con(v^\mathsf{s}_0)\lor\lnot\Con(v^\mathsf{s}_1)\\ \lor\exists v^\mathsf{p}(R(v^\mathsf{p},v^\mathsf{s}_0)\land R(v^\mathsf{p},v^\mathsf{s}_1))\lor\exists v^\mathsf{p}(\lnot R(v^\mathsf{p},v^\mathsf{s}_0)\land\lnot R(v^\mathsf{p},v^\mathsf{s}_1))\bigr).
\end{multline*}
\item Compactness is not preserved under dense Chu transforms and, therefore, is not expressible in general by an inconsistency-flow formula. To see this, let $Y$ be an infinite set and $p\in Y$. Equip $Y$ with the particular point topology (see the book \cite{MR0507446} by Lynn Steen and J. Arthur Seebach) defined as $\tau(Y)=\Set{X\subseteq Y}{X=\emptyset\text{ or }p\in X}$. Then, the inclusion function $\{p\}\to Y$ gives a dense Chu transform from a compact space to a non-compact space.
\end{enumerate}
\end{examples}

The following definition aims to capture exactly how much compactness is preserved by dense Chu transforms. The terminology comes from the ``absolutely closed'' topological spaces introduced by Alexandroff and Urysohn~\cite{MR1512213}.

\begin{definition}
Let $\kappa\le\lambda$ be infinite cardinals. A Chu space is \emph{$\langle\kappa,\lambda\rangle$-absolutely closed} if it satisfies the following $\mathcal{L}^\mathrm{II}_{{(\lambda^{<\kappa})}^+,\lambda^+}$-sentence:
\begin{multline}\label{eq:hclosed}
\forall\Seq{v^\mathsf{s}_\alpha}{\alpha<\lambda}
\biggl[\exists v^\mathsf{p}\bigwedge_{\alpha<\lambda}\lnot R(v^\mathsf{p},v^\mathsf{s}_\alpha)\\ \lor\bigvee_{Z\in{[\lambda]}^{<\kappa}}\forall v^\mathsf{s}\bigl[\lnot\Con(v^\mathsf{s})\lor\exists v^\mathsf{p}\bigl(R(v^\mathsf{p},v^\mathsf{s})\land\bigvee_{\alpha\in Z}R(v^\mathsf{p},v^\mathsf{s}_\alpha)\bigr)\bigr]\biggr].
\end{multline}
\end{definition}

As for compactness, we relate this property with topology and logic.

\begin{remark}
Let $\langle X,\tau(X)\rangle$ be a topological space; then the following are equivalent:
\begin{itemize}
\item the Chu space $\langle X,\in,\tau(X)\rangle$ is $\langle\kappa,\lambda\rangle$-absolutely closed,
\item every open cover of $X$ of cardinality $\le\lambda$ has a subset of cardinality $<\kappa$ whose union is dense in $X$.
\end{itemize}
Let $\langle M,\vDash,S\rangle$ be an abstract logic which is closed under negation; then the following are equivalent:
\begin{itemize}
\item the Chu space $\langle M,\vDash,S\rangle$ is $\langle\kappa,\lambda\rangle$-absolutely closed,
\item given a set of sentences $T$ of cardinality $\le\lambda$, if for each $\Sigma\in{[T]}^{<\kappa}$ there exists a consistent sentence $\sigma$ such that every model of $\sigma$ is a model of $\Sigma$, then $T$ has a model.
\end{itemize}
\end{remark}

Since the sentence in \eqref{eq:hclosed} is inconsistency flow, from Proposition~\ref{prop:dense} we obtain the following corollary.

\begin{corollary}
Dense Chu transforms preserve $\langle\kappa,\lambda\rangle$-absolute closure of Chu spaces.
\end{corollary}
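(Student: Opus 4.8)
The plan is to observe that the corollary reduces, via Proposition~\ref{prop:dense}, to the purely syntactic task of verifying that the sentence displayed in \eqref{eq:hclosed} is an inconsistency-flow formula. First I would recall that $\Con(v^\mathsf{s})$ abbreviates $\exists v^\mathsf{p}R(v^\mathsf{p},v^\mathsf{s})$, so that $\lnot\Con(v^\mathsf{s})$ is literally the atomic inconsistency-flow formula $\lnot\exists v^\mathsf{p}R(v^\mathsf{p},v^\mathsf{s})$. The only other atoms occurring in \eqref{eq:hclosed} have the forms $R(v^\mathsf{p},v^\mathsf{s})$, $R(v^\mathsf{p},v^\mathsf{s}_\alpha)$, and $\lnot R(v^\mathsf{p},v^\mathsf{s}_\alpha)$, each of which is listed as an atomic inconsistency-flow formula.

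Next I would traverse the parse tree of \eqref{eq:hclosed} from the inside out, checking that each step applies only an operation permitted by the recursive definition: infinitary conjunction, infinitary disjunction, existential quantification over the points sort, or universal quantification over the states sort. On one branch, $\bigwedge_{\alpha<\lambda}\lnot R(v^\mathsf{p},v^\mathsf{s}_\alpha)$ is an infinitary conjunction of atoms, and prefixing $\exists v^\mathsf{p}$ keeps us in the class. On the other branch, $\bigvee_{\alpha\in Z}R(v^\mathsf{p},v^\mathsf{s}_\alpha)$ is an infinitary disjunction of atoms; its conjunction with $R(v^\mathsf{p},v^\mathsf{s})$ is inconsistency flow; prefixing $\exists v^\mathsf{p}$ preserves this; disjoining with $\lnot\Con(v^\mathsf{s})$ preserves it; prefixing $\forall v^\mathsf{s}$ preserves it; taking the disjunction over $Z\in{[\lambda]}^{<\kappa}$ preserves it; and disjoining the two branches and then prefixing $\forall\Seq{v^\mathsf{s}_\alpha}{\alpha<\lambda}$ again keeps us in the class, the latter because it is universal quantification over a tuple of the states sort. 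Hence \eqref{eq:hclosed} is an inconsistency-flow sentence.

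The corollary then follows at once: if $(f,g)$ is a dense Chu transform and its domain satisfies \eqref{eq:hclosed}, then Proposition~\ref{prop:dense} hands us \eqref{eq:hclosed} in the codomain, which is exactly $\langle\kappa,\lambda\rangle$-absolute closure of the codomain. I do not expect any real obstacle; all the substance was already spent on isolating the inconsistency-flow formulas and proving Proposition~\ref{prop:dense}, and the one place a slip could occur is miscounting the sort of a quantifier in \eqref{eq:hclosed}, which is precisely why the inside-out check above is worth carrying out explicitly rather than waving through.
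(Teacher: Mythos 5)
Your proof is correct and is exactly the paper's argument: the corollary is obtained by observing that the sentence in \eqref{eq:hclosed} is an inconsistency-flow sentence and then invoking Proposition~\ref{prop:dense}. Your explicit inside-out syntactic check (including that $\lnot\Con(v^\mathsf{s})$ is the atomic formula $\lnot\exists v^\mathsf{p}R(v^\mathsf{p},v^\mathsf{s})$ and that $\forall\Seq{v^\mathsf{s}_\alpha}{\alpha<\lambda}$ is permitted universal quantification over a tuple of state variables) simply spells out what the paper leaves implicit.
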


We observe that compactness implies absolute closure. Moreover, in many cases which occur naturally for abstract logics, the two properties coincide.

\begin{proposition}
Let $\kappa\le\lambda$ be infinite cardinals.
\begin{itemize}
\item If a Chu space is $\langle\kappa,\lambda\rangle$-compact, then it is $\langle\kappa,\lambda\rangle$-absolutely closed.
\item If a Chu space is $\langle\kappa,\lambda\rangle$-absolutely closed and admits complements and ${<}\kappa$-intersections, then it is $\langle\kappa,\lambda\rangle$-compact.
\end{itemize}
\end{proposition}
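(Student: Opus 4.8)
\emph{Approach.} The plan is to prove both implications directly, by applying the defining sentences \eqref{for:compactness} and \eqref{eq:hclosed} to a fixed sequence $\Seq{a_\alpha}{\alpha<\lambda}$ of states of the given Chu space $\langle X,r,A\rangle$ and matching the disjuncts. The first disjuncts of \eqref{for:compactness} and \eqref{eq:hclosed} are literally the same formula, so the work is entirely in relating the second disjuncts. Call $\Set{a_\alpha}{\alpha\in Z}$ a \emph{cover} if every $x\in X$ satisfies $\langle x,a_\alpha\rangle\in r$ for some $\alpha\in Z$; then the second disjunct of \eqref{for:compactness} asserts exactly that $\Set{a_\alpha}{\alpha\in Z}$ is a cover for some $Z\in{[\lambda]}^{<\kappa}$.

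\emph{First bullet.} Assume $\langle X,r,A\rangle$ is $\langle\kappa,\lambda\rangle$-compact and apply \eqref{for:compactness} to $\Seq{a_\alpha}{\alpha<\lambda}$. If its first disjunct holds, the same point witnesses the first disjunct of \eqref{eq:hclosed}. If its second disjunct holds, fix $Z\in{[\lambda]}^{<\kappa}$ such that $\Set{a_\alpha}{\alpha\in Z}$ is a cover; I claim this $Z$ also witnesses the second disjunct of \eqref{eq:hclosed}. Indeed, given any state $c$, either $\lnot\Con(c)$ and we are done, or there is $x$ with $\langle x,c\rangle\in r$, and then choosing $\alpha\in Z$ with $\langle x,a_\alpha\rangle\in r$ produces a point realizing $R(v^\mathsf{p},c)\land\bigvee_{\alpha\in Z}R(v^\mathsf{p},a_\alpha)$. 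No closure hypotheses are used here.

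\emph{Second bullet.} Assume $\langle X,r,A\rangle$ is $\langle\kappa,\lambda\rangle$-absolutely closed and admits complements and ${<}\kappa$-intersections, and apply \eqref{eq:hclosed} to $\Seq{a_\alpha}{\alpha<\lambda}$. If its first disjunct holds we are done, as it coincides with the first disjunct of \eqref{for:compactness}. Otherwise fix $Z\in{[\lambda]}^{<\kappa}$ witnessing its second disjunct; it remains to show that $\Set{a_\alpha}{\alpha\in Z}$ is a cover. If not, choose $x_0\in X$ with $\langle x_0,a_\alpha\rangle\notin r$ for every $\alpha\in Z$. Using Definition~\ref{def:negconjdisj}: since the space admits complements, for each $\alpha\in Z$ pick a state $b_\alpha$ with $\langle x,b_\alpha\rangle\in r\iff\langle x,a_\alpha\rangle\notin r$ for all $x$; since $\abs{Z}<\kappa$ and the space admits ${<}\kappa$-intersections, pick a state $c$ which is the intersection of $\Set{b_\alpha}{\alpha\in Z}$, so that $\langle x,c\rangle\in r$ if and only if $\langle x,a_\alpha\rangle\notin r$ for all $\alpha\in Z$. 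Then $\langle x_0,c\rangle\in r$, hence $\Con(c)$ holds, so the fixed instance of the second disjunct of \eqref{eq:hclosed}, applied to the state $c$, yields some $x$ with $\langle x,c\rangle\in r$ and $\langle x,a_\alpha\rangle\in r$ for some $\alpha\in Z$; but $\langle x,c\rangle\in r$ forces $\langle x,a_\alpha\rangle\notin r$ for every $\alpha\in Z$, a contradiction. Therefore $\Set{a_\alpha}{\alpha\in Z}$ is a cover, giving the second disjunct of \eqref{for:compactness}.

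\emph{Main obstacle.} The one step with genuine content is the construction of the state $c$ in the second bullet: $c$ must behave like the complement of $\bigcup_{\alpha\in Z}a_\alpha$, and no single clause of Definition~\ref{def:negconjdisj} provides such a state on its own — one complements each $a_\alpha$ and then takes the ${<}\kappa$-intersection, which is precisely why both closure hypotheses are needed (and the first bullet needs neither). Everything else is a matter of reading off the quantifier structure of \eqref{for:compactness} and \eqref{eq:hclosed}, and I expect no further difficulty.
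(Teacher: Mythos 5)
Your proof is correct and follows essentially the same route as the paper: the first disjuncts of \eqref{for:compactness} and \eqref{eq:hclosed} coincide, the implication between the second disjuncts holds in any Chu space (first bullet), and under complements and ${<}\kappa$-intersections it becomes an equivalence via the state representing the complement of $\bigcup_{\alpha\in Z}a_\alpha$ (second bullet). You merely spell out the details that the paper compresses into its two displayed implications and the remark that the two points ``follow easily''.
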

\begin{proof}
Given $Z\in{[\lambda]}^{<\kappa}$, the following holds in any Chu space:
\[
\forall\Seq{v^\mathsf{s}_\alpha}{\alpha\in Z}\biggl[\forall v^\mathsf{p}\bigvee_{\alpha\in Z}R(v^\mathsf{p},v^\mathsf{s}_\alpha)\longrightarrow\forall v^\mathsf{s}\bigl[\lnot\Con(v^\mathsf{s})\lor\exists v^\mathsf{p}\bigl(R(v^\mathsf{p},v^\mathsf{s})\land\bigvee_{\alpha\in Z}R(v^\mathsf{p},v^\mathsf{s}_\alpha)\bigr)\bigr]\biggr].
\]
Furthermore, the following holds in any Chu space which admits complements and ${<}\kappa$-intersections:
\[
\forall\Seq{v^\mathsf{s}_\alpha}{\alpha\in Z}\biggl[\forall v^\mathsf{p}\bigvee_{\alpha\in Z}R(v^\mathsf{p},v^\mathsf{s}_\alpha)\longleftrightarrow\forall v^\mathsf{s}\bigl[\lnot\Con(v^\mathsf{s})\lor\exists v^\mathsf{p}\bigl(R(v^\mathsf{p},v^\mathsf{s})\land\bigvee_{\alpha\in Z}R(v^\mathsf{p},v^\mathsf{s}_\alpha)\bigr)\bigr]\biggr].
\]
Each of the two points then follows easily.
\end{proof}

In analogy with the preservation results of the previous section, we aim to show that Proposition~\ref{prop:dense} is optimal for $\mathcal{L}^\mathrm{II}_{\omega,\omega}$-formulas. To do so, we shall use recursively saturated model pairs as in van Benthem's proof of Theorem~\ref{the-vb}. Recursively saturated structures were first studied in John Schlipf's PhD thesis \cite{MR2625642} and later appeared in a paper by Barwise and Schlipf~\cite{MR0403952}. Let us first recall their definition.

\begin{definition}[{Barwise and Schlipf~\cite[Definition~1.1]{MR0403952}}]
Let $\mathcal{L}$ be a finite language. An $\mathcal{L}$-structure $\mathfrak{M}$ is \emph{recursively saturated} if for every finite $C\subseteq M$ and every recursive set $\Gamma(x)$ of $\mathcal{L}_{\omega,\omega}$-formulas with parameters from $C$, if $\Gamma(x)$ is finitely satisfiable in $\mathfrak{M}$, then $\Gamma(x)$ is satisfiable in $\mathfrak{M}$.
\end{definition}

The next theorem is an existence result for recursively saturated structures.

\begin{theorem}[{Barwise and Schlipf~\cite[1.4(i)]{MR0403952}}]\label{theorem:bs}
Each structure in a finite language has a recursively saturated elementary extension of the same cardinality.
\end{theorem}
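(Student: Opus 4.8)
The plan is to construct $\mathfrak{N}$ as the union of an elementary chain of length $\omega$, realizing at each stage every recursive type that is finitely satisfiable. Two features make this work: a finite language has only countably many recursive sets of $\mathcal{L}_{\omega,\omega}$-formulas, which keeps the cardinality under control; and finite satisfiability of such a set is both preserved and reflected by elementary substructures, which makes the limit stage succeed. If $\mathfrak{M}$ is finite we are already done, since a finitely satisfiable set of formulas over a finite structure is realized: were $\Gamma(x)$ finitely satisfiable but realized by no element, then picking for each $a\in M$ some $\gamma_a\in\Gamma$ that fails at $a$ would exhibit a finite unsatisfiable subset of $\Gamma$. So we may assume $\mathfrak{M}$ is infinite, say of cardinality $\kappa$.

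First I would establish a \emph{one-step lemma}: there is an elementary extension $\mathfrak{M}'\succeq\mathfrak{M}$ with $|M'|=\kappa$ such that for every recursive set $\Gamma(x,\bar y)$ of $\mathcal{L}_{\omega,\omega}$-formulas and every tuple $\bar c$ from $M$, if $\Gamma(x,\bar c)$ is finitely satisfiable in $\mathfrak{M}$ then it is realized in $\mathfrak{M}'$. To prove it, list as $(\Gamma_i,\bar c_i)_{i<\kappa}$ all such pairs for which $\Gamma_i(x,\bar c_i)$ is finitely satisfiable in $\mathfrak{M}$; there are $\aleph_0\cdot\kappa=\kappa$ of them because $\mathcal{L}$ is finite. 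Add fresh constant symbols $(d_i)_{i<\kappa}$ and let $T$ be the elementary diagram of $\mathfrak{M}$ together with all sentences $\gamma(d_i,\bar c_i)$ for $\gamma\in\Gamma_i$. A finite subset $T_0$ of $T$ involves only finitely many $d_i$ and, for each, only finitely many members of $\Gamma_i$; by finite satisfiability of $\Gamma_i(x,\bar c_i)$ in $\mathfrak{M}$ we can interpret those $d_i$ by elements of $M$ so that $T_0$ holds. Hence $T$ is finitely satisfiable, so by compactness it has a model, and by the downward L\"owenheim--Skolem theorem we may pass to an elementary substructure of cardinality $\kappa$ containing $M$ and every $d_i$. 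Its $\mathcal{L}$-reduct is a model of the elementary diagram of $\mathfrak{M}$, hence an elementary extension $\mathfrak{M}'\succeq\mathfrak{M}$ of size $\kappa$ that realizes each $\Gamma_i(x,\bar c_i)$.

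Next I would iterate: set $\mathfrak{M}_0=\mathfrak{M}$ and $\mathfrak{M}_{n+1}=(\mathfrak{M}_n)'$ from the lemma, giving an elementary chain $\mathfrak{M}_0\preceq\mathfrak{M}_1\preceq\cdots$ of structures of size $\kappa$, and let $\mathfrak{N}=\bigcup_{n<\omega}\mathfrak{M}_n$, so that $|N|=\kappa$ and, by the elementary chain theorem, $\mathfrak{M}_n\preceq\mathfrak{N}$ for every $n$. To verify recursive saturation, take a finite $C\subseteq N$, a recursive $\Gamma(x,\bar y)$, and a tuple $\bar c$ enumerating $C$ with $\Gamma(x,\bar c)$ finitely satisfiable in $\mathfrak{N}$. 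Pick $n$ with $C\subseteq M_n$. For each finite $\Gamma_0\subseteq\Gamma$ the sentence $\exists x\,\bigwedge_{\gamma\in\Gamma_0}\gamma(x,\bar c)$ is a first-order $\mathcal{L}$-sentence with parameters in $M_n$ that holds in $\mathfrak{N}$, hence holds in $\mathfrak{M}_n$ since $\mathfrak{M}_n\preceq\mathfrak{N}$; thus $\Gamma(x,\bar c)$ is finitely satisfiable in $\mathfrak{M}_n$. It was therefore among the types realized when passing from $\mathfrak{M}_n$ to $\mathfrak{M}_{n+1}$, so it is realized in $\mathfrak{M}_{n+1}$, and hence in $\mathfrak{N}$ because $\mathfrak{M}_{n+1}\preceq\mathfrak{N}$.

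The only genuine point to watch is the balance between the two requirements on $\mathfrak{N}$: it is precisely the finiteness of $\mathcal{L}$ that bounds the recursive types by $\aleph_0$ and so stops each stage from enlarging the cardinality, and it is precisely the absoluteness of finite satisfiability of a recursive type between a model and its elementary extensions that lets the saturation secured for all finite parameter sets of $\mathfrak{M}_n$ at stage $n+1$ persist in the union. Everything else is a routine elementary-chain argument.
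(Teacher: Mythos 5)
Your argument is correct: the one-step compactness-plus-L\"owenheim--Skolem lemma (using that a finite language admits only countably many recursive sets of formulas, so only $\kappa$ many type/parameter pairs), followed by a length-$\omega$ elementary chain and the reflection of finite satisfiability down to the stage containing the parameters, is exactly the standard proof of this fact. The paper itself gives no proof --- it cites Barwise and Schlipf --- and your construction is essentially the argument found in that reference, so there is nothing to compare beyond noting that your treatment of the finite case and of the cardinality bookkeeping is sound.
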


Let $\mathcal{L}=\{R_1,\dots,R_n\}$ be a finite relational language. Given two $\mathcal{L}$-structures $\mathfrak{M}$ and $\mathfrak{N}$, the \emph{model pair} $(\mathfrak{M},\mathfrak{N})$ is defined as the structure
\[
(\mathfrak{M},\mathfrak{N})=\bigl\langle M\cup N,M,N,R_1^\mathfrak{M},\dots,R_n^\mathfrak{M},R_1^\mathfrak{N},\dots,R_n^\mathfrak{N}\bigr\rangle.
\]
Each $\mathcal{L}_{\omega,\omega}$-formula $\varphi(\bm{x})$ has relativizations $\varphi'(\bm{x})$ and $\varphi''(\bm{x})$, obtained from $\varphi$ in a recursive way, such that for all $\bm{m}\in M$
\[
\mathfrak{M}\models\varphi[\bm{m}]\iff(\mathfrak{M},\mathfrak{N})\models\varphi'[\bm{m}]
\]
and for all $\bm{n}\in N$
\[
\mathfrak{N}\models\varphi[\bm{n}]\iff(\mathfrak{M},\mathfrak{N})\models\varphi''[\bm{n}].
\]
This technique of gluing structures together dates back to Abraham Robinson's proof of his consistency theorem \cite[Theorem~2.6]{MR0078307}. In the next lemma, we apply this to Chu spaces coded as two-sorted relational structures.

\begin{lemma}\label{lemma:sat}
Let $\langle X,r,A\rangle$ and $\langle Y,s,B\rangle$ be countable Chu spaces such that the model pair $(\langle X,r,A\rangle,\langle Y,s,B\rangle)$ is recursively saturated. Given finite tuples $\bm{x}\in X$ and $\bm{y}\in Y$ of matching length, as well as finite tuples $\bm{a}\in A$ and $\bm{b}\in B$ of matching length, the following are equivalent:
\begin{enumerate}
\item\label{lemma1} for every first-order inconsistency-flow formula $\gamma(\bm{v}^\mathsf{p},\bm{v}^\mathsf{s})$, if $\langle X,r,A\rangle\models\gamma[\bm{x},\bm{a}]$ then $\langle Y,s,B\rangle\models\gamma[\bm{y},\bm{b}]$;
\item\label{lemma2} there exists a dense Chu transform $(f,g)$ from $\langle X,r,A\rangle$ to $\langle Y,s,B\rangle$ such that $f(\bm{x})=\bm{y}$ and $g(\bm{b})=\bm{a}$.
\end{enumerate}
\end{lemma}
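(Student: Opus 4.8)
The plan is to prove the two directions separately. The direction \eqref{lemma2}$\Rightarrow$\eqref{lemma1} is immediate: if $(f,g)$ is a dense Chu transform from $\langle X,r,A\rangle$ to $\langle Y,s,B\rangle$ with $f(\bm{x})=\bm{y}$ and $g(\bm{b})=\bm{a}$, then by Proposition~\ref{prop:dense} every inconsistency-flow formula $\gamma$ is preserved under $(f,g)$, and instantiating Definition~\ref{def:preservation} at the tuples $\bm{x}$ and $\bm{b}$ gives exactly \eqref{lemma1}.

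All the work lies in \eqref{lemma1}$\Rightarrow$\eqref{lemma2}. Assuming \eqref{lemma1}, I would run a back-and-forth construction inside the recursively saturated model pair $\mathfrak{P}=(\langle X,r,A\rangle,\langle Y,s,B\rangle)$. Since $X$ and $B$ are countable, fix enumerations $X=\Set{x_n}{n<\omega}$ and $B=\Set{b_n}{n<\omega}$, and build an increasing chain of finite partial functions $f_n$ from $X$ to $Y$ and $g_n$ from $B$ to $A$, starting with $f_0=\{\bm{x}\mapsto\bm{y}\}$ and $g_0=\{\bm{b}\mapsto\bm{a}\}$ and maintaining the \emph{invariant}: writing $\bm{p}$ for the domain of $f_n$, $\bm{q}=f_n(\bm{p})$, $\bm{d}$ for the domain of $g_n$ and $\bm{c}=g_n(\bm{d})$, for every first-order inconsistency-flow formula $\gamma$ in the corresponding free variables one has $\langle X,r,A\rangle\models\gamma[\bm{p},\bm{c}]\implies\langle Y,s,B\rangle\models\gamma[\bm{q},\bm{d}]$. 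The invariant holds at stage $0$ by \eqref{lemma1}. Interleaving the two enumerations so that every $x_n$ eventually enters the domain of $f$ and every $b_n$ eventually enters the domain of $g$, the unions $f=\bigcup_n f_n$ and $g=\bigcup_n g_n$ are total on $X$ and on $B$. The atomic inconsistency-flow formulas $R(v^\mathsf{p},v^\mathsf{s})$ and $\lnot R(v^\mathsf{p},v^\mathsf{s})$ being covered by the invariant, $(f,g)$ satisfies the adjointness condition of Definition~\ref{def:chut}; and since $\lnot\exists v^\mathsf{p}R(v^\mathsf{p},v^\mathsf{s})$ is also atomic inconsistency-flow, whenever $g(b)$ is inconsistent in $\langle X,r,A\rangle$ then $b$ is inconsistent in $\langle Y,s,B\rangle$, which — given that $f$ is total and that adjointness turns any $x$ with $\langle x,g(b)\rangle\in r$ into a witness $\langle f(x),b\rangle\in s$ — is precisely the contrapositive of the density condition of Definition~\ref{def:surj}. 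Hence $(f,g)$ is the required dense Chu transform with $f(\bm{x})=\bm{y}$ and $g(\bm{b})=\bm{a}$.

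The two extension steps are where recursive saturation is used. To put a new point $x_*\in X$ into the domain of $f$: let $\Gamma(v)$ be the set of model-pair formulas $\gamma'(\bm{p},x_*,\bm{c})\to\gamma''(\bm{q},v,\bm{d})$, one for each first-order inconsistency-flow $\gamma$ in the appropriate free variables, where $\gamma'$ and $\gamma''$ denote the relativizations to the $X$-half and the $Y$-half of $\mathfrak{P}$. This is a recursive set of first-order formulas with parameters in the current finite working set. It is finitely satisfiable: given finitely many members, discard those whose hypothesis fails in $\langle X,r,A\rangle$ and, for the surviving $\gamma_i$ ($i\in I$), note that $\langle X,r,A\rangle\models\exists v^\mathsf{p}\bigwedge_{i\in I}\gamma_i[\bm{p},\bm{c}]$; as $\exists v^\mathsf{p}\bigwedge_{i\in I}\gamma_i$ is again inconsistency-flow, the invariant yields a witness $v\in Y$ satisfying those members. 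Recursive saturation then realizes $\Gamma(v)$ by some $y_*\in Y$; setting $f(x_*)=y_*$ preserves the invariant by the relativization property. Dually, to put $b_*\in B$ into the domain of $g$, let $\Delta(w)=\Set{\delta'(\bm{p},\bm{c},w)\to\delta''(\bm{q},\bm{d},b_*)}{\delta\text{ inconsistency-flow}}$, again recursive; for finite satisfiability, discard those members whose conclusion holds in $\langle Y,s,B\rangle$, so the surviving conjuncts are $\lnot\delta_i'(\bm{p},\bm{c},w)$ for $i\in I$, and if these had no common realization in $A$ then $\langle X,r,A\rangle\models\forall v^\mathsf{s}\bigvee_{i\in I}\delta_i[\bm{p},\bm{c}]$, an inconsistency-flow sentence, whence the invariant would give $\langle Y,s,B\rangle\models\forall v^\mathsf{s}\bigvee_{i\in I}\delta_i[\bm{q},\bm{d}]$ and in particular $\langle Y,s,B\rangle\models\bigvee_{i\in I}\delta_i[\bm{q},\bm{d},b_*]$, contradicting the choice of $I$. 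So $\Delta(w)$ is finitely satisfiable, recursive saturation realizes it by some $a_*\in A$, and we set $g(b_*)=a_*$.

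The essential difficulty — and the reason countability and recursive saturation are hypothesized — is exactly putting $\Gamma$ and $\Delta$ into a shape to which recursive saturation applies: one cannot literally list the inconsistency-flow formulas true of $x_*$ (respectively false of $b_*$), since that set need not be recursive, so the types are written as implications indexed by codes of inconsistency-flow formulas, and the verification of finite satisfiability then rests on precisely the closure of the inconsistency-flow class under $\exists\bm{v}^\mathsf{p}$ (for the point step), under $\forall\bm{v}^\mathsf{s}$ (for the state step), and under finite conjunctions and disjunctions. Everything else is the routine bookkeeping of a back-and-forth argument; this lemma will then feed into the proof of the main Theorem~\ref{theorem:preservationdense}, mirroring the way van Benthem's analogous lemma feeds into Theorem~\ref{the-vb}.
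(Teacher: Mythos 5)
Your proposal is correct and follows essentially the same route as the paper's proof: enumerate $X$ and $B$, extend the maps one element at a time inside the recursively saturated model pair using implication-types $\Gamma$ and $\Delta$, establish finite satisfiability via closure of inconsistency-flow formulas under finite conjunction/disjunction, $\exists\bm{v}^\mathsf{p}$ and $\forall\bm{v}^\mathsf{s}$, and read off adjointness and density from the atomic formulas $R$, $\lnot R$ and $\lnot\exists v^\mathsf{p}R(v^\mathsf{p},v^\mathsf{s})$. The only detail worth adding is the well-definedness check (e.g.\ that repeated coordinates in $\bm{x}$ or $\bm{b}$, or repeats in the enumerations, cause no conflict), which follows exactly as in the paper from the fact that $(v^\mathsf{p}_0=v^\mathsf{p}_1)$ and $\lnot(v^\mathsf{s}_0=v^\mathsf{s}_1)$ are inconsistency-flow formulas.
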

\begin{proof}
$(\ref{lemma1}\implies\ref{lemma2})$ For simplicity of notation, let us assume that the given tuples have length one: we denote them as single elements $x\in X$, $y\in Y$, $a\in A$, and $b\in B$. Since $X$ and $B$ are assumed to be countable, we can enumerate
\[
X=\Set{x_n}{n<\omega}\quad\text{and}\quad B=\Set{b_n}{n<\omega}
\]
in such a way that $x_0=x$ and $b_0=b$.

We shall recursively construct $\Seq{y_n}{n<\omega}$ in $Y$ and $\Seq{a_n}{n<\omega}$ in $A$ such that for each $n<\omega$ the following condition is satisfied:
\begin{multline}\label{starn}
\text{for every first-order inconsistency-flow formula }\gamma(v_0^\mathsf{p},\dots,v_n^\mathsf{p},v_0^\mathsf{s},\dots,v_n^\mathsf{s}),\\
\langle X,r,A\rangle\models\gamma[x_0,\dots,x_n,a_0,\dots,a_n]\implies\langle Y,s,B\rangle\models\gamma[y_0,\dots,y_n,b_0,\dots,b_n].
\end{multline}
For the base case, let $y_0=y$ and $a_0=a$. Condition \eqref{starn} is satisfied because of our assumption \eqref{lemma1}.

At stage $n<\omega$, assume we have constructed $\langle y_0,\dots,y_n\rangle$ and $\langle a_0,\dots,a_n\rangle$ satisfying \eqref{starn}. To get $y_{n+1}\in Y$, consider the following recursive\footnote{The set of all first-order inconsistency-flow formulas is recursive. Notice that $\Gamma(v^\mathsf{p})$ contains no mention of the relation of satisfiability in $\mathfrak{M}$ or $\mathfrak{N}$.} set of formulas:
\begin{multline*}
\Gamma(v^\mathsf{p})=\bigl\{\gamma'(x_0,\dots,x_{n+1},a_0,\dots,a_n)\rightarrow\gamma''(y_0,\dots,y_n,v^\mathsf{p},b_0,\dots,b_n)\bigm\vert\\
\gamma(v_0^\mathsf{p},\dots,v_n^\mathsf{p},v^\mathsf{p},v_0^\mathsf{s},\dots,v_n^\mathsf{s})\text{ is }\mathcal{L}^\mathrm{II}_{\omega,\omega}\text{-inconsistency flow}\bigr\}.
\end{multline*}
We claim that $\Gamma(v^\mathsf{p})$ is finitely satisfiable in $(\langle X,r,A\rangle,\langle Y,s,B\rangle)$. Indeed, let $\gamma_1,\dots,\gamma_m$ for some $m<\omega$ be inconsistency-flow formulas, as in the definition of $\Gamma(v^\mathsf{p})$, and suppose that for each $1\le i\le m$
\[
\langle X,r,A\rangle\models\gamma_i[x_0,\dots,x_{n+1},a_0,\dots,a_n].
\]
We let $\gamma(v_0^\mathsf{p},\dots,v_n^\mathsf{p},v_0^\mathsf{s},\dots,v_n^\mathsf{s})$ be the formula $\exists v^\mathsf{p}(\gamma_1\land\dots\land\gamma_m)$. Since $\gamma$ is also inconsistency flow and $\langle X,r,A\rangle\models\gamma[x_0,\dots,x_n,a_0,\dots,a_n]$, the inductive hypothesis \eqref{starn} guarantees that $\langle Y,s,B\rangle\models\gamma[y_0,\dots,y_n,b_0,\dots,b_n]$. This means that there exists $y\in Y$ such that for each $1\le i\le m$
\[
\langle Y,s,B\rangle\models\gamma_i[y_0,\dots,y_n,y,b_0,\dots,b_n],
\]
thus showing that $\Gamma(v^\mathsf{p})$ is finitely satisfiable in $(\langle X,r,A\rangle,\langle Y,s,B\rangle)$. By recursive saturation, $\Gamma(v^\mathsf{p})$ is satisfiable in $(\langle X,r,A\rangle,\langle Y,s,B\rangle)$ by some element $y_{n+1}\in Y$. Then it is clear by construction that
\begin{multline}\label{starn1}
\text{for every first-order inconsistency-flow formula }\gamma(v_0^\mathsf{p},\dots,v_{n+1}^\mathsf{p},v_0^\mathsf{s},\dots,v_n^\mathsf{s}),\\
\langle X,r,A\rangle\models\gamma[x_0,\dots,x_{n+1},a_0,\dots,a_n]\implies\langle Y,s,B\rangle\models\gamma[y_0,\dots,y_{n+1},b_0,\dots,b_n].
\end{multline}

To get $a_{n+1}\in A$, we proceed analogously and consider the recursive set of formulas
\begin{multline*}
\Delta(v^\mathsf{s})=\bigl\{\gamma'(x_0,\dots,x_{n+1},a_0,\dots,a_n,v^\mathsf{s})\rightarrow\gamma''(y_0,\dots,y_{n+1},b_0,\dots,b_{n+1})\bigm\vert\\
\gamma(v_0^\mathsf{p},\dots,v_{n+1}^\mathsf{p},v_0^\mathsf{s},\dots,v_n^\mathsf{s},v^\mathsf{s})\text{ is }\mathcal{L}^\mathrm{II}_{\omega,\omega}\text{-inconsistency flow}\bigr\}.
\end{multline*}
To see that $\Delta(v^\mathsf{s})$ is finitely satisfiable in $(\langle X,r,A\rangle,\langle Y,s,B\rangle)$, let $\gamma_1,\dots,\gamma_m$ for some $m<\omega$ be inconsistency-flow formulas as above and suppose that for each $1\le i\le m$
\[
\langle Y,s,B\rangle\models\lnot\gamma_i[y_0,\dots,y_{n+1},b_0,\dots,b_{n+1}].
\]
This time, we let $\gamma(v_0^\mathsf{p},\dots,v_{n+1}^\mathsf{p},v_0^\mathsf{s},\dots,v_n^\mathsf{s})$ be the inconsistency-flow formula $\forall v^\mathsf{s}(\gamma_1\lor\dots\lor\gamma_m)$. Since $\langle Y,s,B\rangle\models\lnot\gamma[y_0,\dots,y_{n+1},b_0,\dots,b_n]$, property \eqref{starn1} implies that $\langle X,r,A\rangle\models\lnot\gamma[x_0,\dots,x_{n+1},a_0,\dots,a_n]$. This means that there exists $a\in A$ such that for each $1\le i\le m$
\[
\langle X,r,A\rangle\models\lnot\gamma_i[x_0,\dots,x_{n+1},a_0,\dots,a_n,a],
\]
thus showing that $\Delta(v^\mathsf{s})$ is finitely satisfiable in $(\langle X,r,A\rangle,\langle Y,s,B\rangle)$. By recursive saturation, we can pick $a_{n+1}\in A$ which satisfies $\Delta(v^\mathsf{s})$ in $(\langle X,r,A\rangle,\langle Y,s,B\rangle)$. This completes the recursive construction of $\Seq{y_n}{n<\omega}$ and $\Seq{a_n}{n<\omega}$.

Now define $f\colon X\to Y$ and $g\colon B\to A$ by letting $f(x_n)=y_n$ and $g(b_n)=a_n$ for each $n<\omega$. Note that, if $x_m=x_n$ for some $m,n<\omega$, then $y_m=y_n$ by \eqref{starn} and the fact that $(v_0^\mathsf{p}=v_1^\mathsf{p})$ is an inconsistency-flow formula. Similarly, if $b_m=b_n$ for some $m,n<\omega$, then $a_m=a_n$ by \eqref{starn} again and the fact that $\lnot(v^\mathsf{s}_0=v^\mathsf{s}_1)$ is an inconsistency-flow formula. Hence, the functions $f$ and $g$ are well defined and, by the base step of the recursive construction, they satisfy $f(x)=y$ and $g(b)=a$.

Finally, we use \eqref{starn} to verify that $(f,g)$ is a dense Chu transform from $\langle X,r,A\rangle$ to $\langle Y,s,B\rangle$. First, to check the adjointness condition of Definition~\ref{def:chut}, take $x\in X$ and $b\in B$ and choose $m,n<\omega$ such that $x_m=x$ and $b_n=b$. Since both $R(v^\mathsf{p},v^\mathsf{s})$ and $\lnot R(v^\mathsf{p},v^\mathsf{s})$ are inconsistency-flow formulas, condition \eqref{starn} implies that
\[
\begin{split}
\langle x,g(b)\rangle\in r&\iff\langle X,r,A\rangle\models R(x_m,a_n)\\
&\iff\langle Y,s,B\rangle\models R(y_m,b_n)\\
&\iff\langle f(x),b\rangle\in s.
\end{split}
\]
For the density condition, let $b\in B$ and suppose there exists $y\in Y$ such that $\langle y,b\rangle\in s$. Choose $n<\omega$ such that $b_n=b$, hence in particular $\langle Y,s,B\rangle\models\exists v^\mathsf{p}R(v^\mathsf{p},b_n)$. Since $\lnot\exists v^\mathsf{p}R(v^\mathsf{p},v^\mathsf{s})$ is an inconsistency-flow formula, from \eqref{starn} it follows that $\langle X,r,A\rangle\models\exists v^\mathsf{p}R(v^\mathsf{p},a_n)$. Choose $x\in X$ such that $\langle x,a_n\rangle\in r$; then $\langle f(x),b\rangle\in s$, as we had to show.

$(\ref{lemma2}\implies\ref{lemma1})$ Follows from Proposition~\ref{prop:dense}.
\end{proof}

H. Jerome Keisler~\cite[\S 5]{MR3727403} applied the method of recursively saturated model pairs to obtain classic preservation results. In a similar fashion, we can apply Lemma~\ref{lemma:sat} to obtain our main theorem.

\begin{theorem}\label{theorem:preservationdense}
Let $T$ be a set of $\mathcal{L}^\mathrm{II}_{\omega,\omega}$-sentences; let $\varphi(\bm{v}^\mathsf{p},\bm{v}^\mathsf{s})$ and $\psi(\bm{v}^\mathsf{p},\bm{v}^\mathsf{s})$ be $\mathcal{L}^\mathrm{II}_{\omega,\omega}$-formulas. Then the following are equivalent:
\begin{enumerate}
\item\label{eq:pres1} If $\langle X,r,A\rangle$ and $\langle Y,s,B\rangle$ are models of $T$ and $(f,g)$ is a dense Chu transform from $\langle X,r,A\rangle$ to $\langle Y,s,B\rangle$, then for all $\bm{x}\in X$ and $\bm{b}\in B$
\[
\langle X,r,A\rangle\models\varphi[\bm{x},g(\bm{b})]\implies\langle Y,s,B\rangle\models\psi[f(\bm{x}),\bm{b}].
\]
\item\label{eq:pres2} There exists a first-order inconsistency-flow formula $\theta(\bm{v}^\mathsf{p},\bm{v}^\mathsf{s})$ such that
\[
T\vdash\forall\bm{v}^\mathsf{p}\forall\bm{v}^\mathsf{s}\bigl((\varphi\rightarrow\theta)\land(\theta\rightarrow\psi)\bigr).
\]
\end{enumerate}
\end{theorem}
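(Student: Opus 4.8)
The plan is to prove the two implications separately. The direction \eqref{eq:pres2}$\,\Rightarrow\,$\eqref{eq:pres1} is immediate: given a first-order inconsistency-flow $\theta$ with $T\vdash\forall\bm{v}^\mathsf{p}\forall\bm{v}^\mathsf{s}\bigl((\varphi\rightarrow\theta)\land(\theta\rightarrow\psi)\bigr)$, models $\langle X,r,A\rangle$ and $\langle Y,s,B\rangle$ of $T$, a dense Chu transform $(f,g)$ between them, and $\langle X,r,A\rangle\models\varphi[\bm{x},g(\bm{b})]$, one gets $\langle X,r,A\rangle\models\theta[\bm{x},g(\bm{b})]$ since $\langle X,r,A\rangle\models T$, then $\langle Y,s,B\rangle\models\theta[f(\bm{x}),\bm{b}]$ by Proposition~\ref{prop:dense}, then $\langle Y,s,B\rangle\models\psi[f(\bm{x}),\bm{b}]$ since $\langle Y,s,B\rangle\models T$. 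All the work therefore goes into \eqref{eq:pres1}$\,\Rightarrow\,$\eqref{eq:pres2}, which I would carry out in the style of van Benthem's proof of Theorem~\ref{the-vb}, with inconsistency-flow formulas replacing flow formulas and Lemma~\ref{lemma:sat} replacing the analogous statement for flow formulas.

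First I would adjoin fresh constants $\bm{c}^\mathsf{p}$ and $\bm{c}^\mathsf{s}$ matching the free variables of $\varphi$ and $\psi$, and set $\Theta$ to be the collection of first-order inconsistency-flow formulas $\gamma(\bm{v}^\mathsf{p},\bm{v}^\mathsf{s})$ with $T\vdash\forall\bm{v}^\mathsf{p}\forall\bm{v}^\mathsf{s}(\varphi\rightarrow\gamma)$. The reduction I would make is that it suffices to prove
\[
T\cup\Set{\gamma(\bm{c}^\mathsf{p},\bm{c}^\mathsf{s})}{\gamma\in\Theta}\vdash\psi(\bm{c}^\mathsf{p},\bm{c}^\mathsf{s});
\]
indeed, compactness then supplies $\gamma_1,\dots,\gamma_k\in\Theta$ with $T\cup\Set{\gamma_i(\bm{c}^\mathsf{p},\bm{c}^\mathsf{s})}{1\le i\le k}\vdash\psi(\bm{c}^\mathsf{p},\bm{c}^\mathsf{s})$, and $\theta:=\gamma_1\land\dots\land\gamma_k$ (the empty conjunction when $k=0$) is again a first-order inconsistency-flow formula, with $T\vdash\forall\bm{v}^\mathsf{p}\forall\bm{v}^\mathsf{s}(\varphi\rightarrow\theta)$ by the definition of $\Theta$ and $T\vdash\forall\bm{v}^\mathsf{p}\forall\bm{v}^\mathsf{s}(\theta\rightarrow\psi)$ once the constants are discharged.

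To prove the entailment I would argue by contradiction. Suppose it fails, and fix a model $\langle Y,s,B\rangle\models T$ that satisfies $\gamma(\bm{c}^\mathsf{p},\bm{c}^\mathsf{s})$ for all $\gamma\in\Theta$ as well as $\lnot\psi(\bm{c}^\mathsf{p},\bm{c}^\mathsf{s})$, with the constants interpreted by $\bm{y}\in Y$ and $\bm{b}\in B$. A second compactness argument produces a model $\langle X_0,r_0,A_0\rangle$ of $T\cup\{\varphi(\bm{c}^\mathsf{p},\bm{c}^\mathsf{s})\}$, with the constants interpreted by $\bm{x}_0\in X_0$ and $\bm{a}_0\in A_0$, such that every first-order inconsistency-flow formula true at $(\bm{x}_0,\bm{a}_0)$ in $\langle X_0,r_0,A_0\rangle$ is true at $(\bm{y},\bm{b})$ in $\langle Y,s,B\rangle$: the theory $T\cup\{\varphi(\bm{c}^\mathsf{p},\bm{c}^\mathsf{s})\}$ augmented with the sentences $\lnot\gamma(\bm{c}^\mathsf{p},\bm{c}^\mathsf{s})$, one for each first-order inconsistency-flow $\gamma$ failing at $(\bm{y},\bm{b})$ in $\langle Y,s,B\rangle$, is consistent, for otherwise compactness would yield such $\gamma_1,\dots,\gamma_m$ with $T\cup\{\varphi(\bm{c}^\mathsf{p},\bm{c}^\mathsf{s})\}\vdash(\gamma_1\lor\dots\lor\gamma_m)(\bm{c}^\mathsf{p},\bm{c}^\mathsf{s})$, making $\gamma_1\lor\dots\lor\gamma_m$ a member of $\Theta$ that holds at $(\bm{y},\bm{b})$ even though each of its disjuncts fails there. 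Now the pair $\langle X_0,r_0,A_0\rangle$, $\langle Y,s,B\rangle$ with the tuples $(\bm{x}_0,\bm{a}_0)$, $(\bm{y},\bm{b})$ realises condition~\eqref{lemma1} of Lemma~\ref{lemma:sat} except for the countability and recursive-saturation requirements. I would obtain those by forming the model pair $(\langle X_0,r_0,A_0\rangle,\langle Y,s,B\rangle)$, cutting down to a countable elementary substructure containing the distinguished elements, and then applying Theorem~\ref{theorem:bs} to get a countable recursively saturated elementary extension $(\langle X^*,r^*,A^*\rangle,\langle Y^*,s^*,B^*\rangle)$; its two components are countable Chu spaces still modelling $T$, and $\varphi[\bm{x}^*,\bm{a}^*]$, $\lnot\psi[\bm{y}^*,\bm{b}^*]$, and all the inconsistency-flow implications linking the tuples persist because, through the relativizations built into the model pair, each of them is a single first-order statement preserved under elementary substructure and elementary extension. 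Lemma~\ref{lemma:sat} then delivers a dense Chu transform $(f,g)$ from $\langle X^*,r^*,A^*\rangle$ to $\langle Y^*,s^*,B^*\rangle$ with $f(\bm{x}^*)=\bm{y}^*$ and $g(\bm{b}^*)=\bm{a}^*$, and since both spaces model $T$ and $\langle X^*,r^*,A^*\rangle\models\varphi[\bm{x}^*,g(\bm{b}^*)]$, hypothesis~\eqref{eq:pres1} forces $\langle Y^*,s^*,B^*\rangle\models\psi[\bm{y}^*,\bm{b}^*]$, contradicting $\lnot\psi[\bm{y}^*,\bm{b}^*]$.

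I expect the main obstacle to be precisely this last interface with Lemma~\ref{lemma:sat}: one has to move down to a countable model pair and up to a recursively saturated one while verifying that every ingredient the lemma consumes --- being a model of $T$, the behaviour of $\varphi$ and $\lnot\psi$ at the distinguished tuples, and the full recursive scheme of inconsistency-flow implications between them --- travels along both steps, so that the lemma applies to the tuples $(\bm{x}^*,\bm{a}^*)$ and $(\bm{y}^*,\bm{b}^*)$. The two compactness steps and the closure of $\Theta$ under the finite conjunctions and disjunctions used above are routine.
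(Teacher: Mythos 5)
Your proposal is correct and follows essentially the same route as the paper's proof: the set of first-order inconsistency-flow consequences of $\varphi$ over $T$, two compactness arguments (one to extract the finite conjunction $\theta$, one to produce the companion model on the $\varphi$-side transferring inconsistency-flow formulas to $(\bm{y},\bm{b})$), a L{\"o}wenheim-Skolem reduction to countable structures, Theorem~\ref{theorem:bs} to obtain a recursively saturated model pair, and Lemma~\ref{lemma:sat} to build the dense Chu transform against which hypothesis~\eqref{eq:pres1} is applied. The only differences are presentational (fresh constants and an argument by contradiction instead of the direct entailment $T\vdash\forall\bm{v}^\mathsf{p}\forall\bm{v}^\mathsf{s}(\bigwedge\Phi\rightarrow\psi)$, and cutting down to countable models after forming the model pair rather than before), and your bookkeeping of what persists through the elementary substructure and extension steps is accurate.
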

\begin{proof}
$(\ref{eq:pres1}\implies\ref{eq:pres2})$ Let $\Phi$ be the set of inconsistency-flow consequences of $\varphi$, namely
\[
\Phi=\Set*{\alpha(\bm{v}^\mathsf{p},\bm{v}^\mathsf{s})}{\alpha\text{ is }\mathcal{L}^\mathrm{II}_{\omega,\omega}\text{-inconsistency flow and }T\vdash\forall\bm{v}^\mathsf{p}\forall\bm{v}^\mathsf{s}(\varphi\rightarrow\alpha)}.
\]
We intend to prove that taking $\theta=\bigwedge F$ for some finite $F\subseteq \Phi$ will satisfy the theorem. We first show that
\begin{equation}\label{eq:cons}
T\vdash\forall\bm{v}^\mathsf{p}\forall\bm{v}^\mathsf{s}\Bigl(\bigwedge\Phi\rightarrow\psi\Bigr).
\end{equation}
If $T$ is inconsistent, then \eqref{eq:cons} holds trivially. Otherwise, let $\langle Y,s,B\rangle$ be a model of $T$ and let $\bm{y}\in Y$ and $\bm{b}\in B$ be such that $\langle Y,s,B\rangle\models\alpha[\bm{y},\bm{b}]$ for each $\alpha\in\Phi$; we need to prove that $\langle Y,s,B\rangle\models\psi[\bm{y},\bm{b}]$. By the downward L{\"o}wenheim-Skolem theorem, we can assume that $\langle Y,s,B\rangle$ is countable.

Let us define
\[
\Sigma=\Set*{\varphi\land \lnot\gamma}{\gamma(\bm{v}^\mathsf{p},\bm{v}^\mathsf{s})\text{ is }\mathcal{L}^\mathrm{II}_{\omega,\omega}\text{-inconsistency flow and }\langle Y,s,B\rangle\models\lnot\gamma[\bm{y},\bm{b}]}.
\]
Now notice that the set of formulas $T\cup\Sigma$ is consistent. Indeed, whenever we take finitely many 
$\gamma_1,\dots,\gamma_m$ with each $\varphi\land\lnot\gamma_i\in\Sigma$, the set
\[
T\cup\Set{\varphi(\bm{v}^\mathsf{p},\bm{v}^\mathsf{s})\land\lnot\gamma_i(\bm{v}^\mathsf{p},\bm{v}^\mathsf{s})}{1\le i\le m}
\]
is consistent, for otherwise we would have $T\vdash\forall\bm{v}^\mathsf{p}\forall\bm{v}^\mathsf{s}\bigl(\varphi\rightarrow(\gamma_1\lor\dots\lor\gamma_m)\bigr)$ and therefore $(\gamma_1\lor\dots\lor\gamma_m)\in\Phi$, a contradiction. Since each of these sets is consistent, by compactness of two-sorted first-order logic the whole set is consistent.

Hence, there exists a countable model $\langle X,r,A\rangle$ of $T$ and $\bm{x}\in X$ and $\bm{a}\in A$ such that each of the following holds:
\begin{itemize}
\item $\langle X,r,A\rangle\models\varphi[\bm{x},\bm{a}]$;
\item for every first-order inconsistency-flow formula $\gamma(\bm{v}^\mathsf{p},\bm{v}^\mathsf{s})$, if $\langle X,r,A\rangle\models\gamma[\bm{x},\bm{a}]$ then $\langle Y,s,B\rangle\models\gamma[\bm{y},\bm{b}]$.
\end{itemize}
Now Theorem~\ref{theorem:bs} applied to the model pair $(\langle X,r,A\rangle,\langle Y,s,B\rangle)$ gives two countable elementary extensions 
\[
\langle X,r,A\rangle\preceq\langle X',r',A'\rangle\text{ and }\langle Y,s,B\rangle\preceq\langle Y',s',B'\rangle
\]
such that $(\langle X',r',A'\rangle,\langle Y',s',B'\rangle)$ is recursively saturated as a model pair. By Lemma~\ref{lemma:sat} applied to $\langle X',r',A'\rangle$ and $\langle Y',s',B'\rangle$, there exists a dense Chu transform $(f,g)$ from $\langle X',r',A'\rangle$ to $\langle Y',s',B'\rangle$ such that $f(\bm{x})=\bm{y}$ and $g(\bm{b})=\bm{a}$. From this, we conclude that $\langle Y,s,B\rangle\models\psi[\bm{y},\bm{b}]$, as we wanted to show.

This completes the proof of \eqref{eq:cons}. By compactness again, there exists a finite subset $F\subseteq\Phi$ such that $T\vdash\forall\bm{v}^\mathsf{p}\forall\bm{v}^\mathsf{s}\Bigl(\bigwedge F\rightarrow\psi\Bigr)$. Then it is sufficient to take $\theta$ to be $\bigwedge F$.

$(\ref{eq:pres2}\implies\ref{eq:pres1})$ Follows from Proposition~\ref{prop:dense}.
\end{proof}

In particular, setting $\varphi$ equal to $\psi$ in the above theorem and taking $T$ to be the empty set, we obtain that an $\mathcal{L}^\mathrm{II}_{\omega,\omega}$-formula is preserved under dense Chu transforms if and only if it is logically equivalent to a first-order inconsistency-flow formula.

\section{Ultrafilters as Chu spaces}\label{sec:ultrafilters}

We shall be coding ultrafilters as partially ordered sets, so we start with a known characterization of Chu transforms in this context. First of all, a partially ordered set $\bigl\langle P,\le^P\bigr\rangle$ can be represented as a separated and extensional Chu space $\bigl\langle P,{\le^P},P\bigr\rangle$. Chu transforms between such Chu spaces were first considered by Benado~\cite{MR0028817} and then systematically studied by J{\"u}rgen Schmidt~\cite{MR0057946}, who found the following characterization.

\begin{proposition}[{Schmidt~\cite[\S 8]{MR0057946}}]\label{proposition:schmidt}
Let $\bigl\langle P,\le^P\bigr\rangle$ and $\bigl\langle Q,\le^Q\bigr\rangle$ be partially ordered sets. For a pair of functions $f\colon P\to Q$ and $g\colon Q\to P$, the following two conditions are equivalent:
\begin{itemize}
\item $(f,g)$ is a Chu transform from $\bigl\langle P,\le^P,P\bigr\rangle$ to $\bigl\langle Q,\le^Q,Q\bigr\rangle$.
\item $f$ and $g$ satisfy each of the following conditions:
\begin{enumerate}
\item\label{ref:op} for all $p_0,p_1\in P$, if $p_0\le^P p_1$ then $f(p_0)\le^Q f(p_1)$;
\item for all $q_0,q_1\in Q$, if $q_0\le^Q q_1$ then $g(q_0)\le^P g(q_1)$;
\item for all $p\in P$, $p\le^P g(f(p))$;
\item\label{ref:dense} for all $q\in Q$, $f(g(q))\le^Q q$.
\end{enumerate}
\end{itemize}
\end{proposition}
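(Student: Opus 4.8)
The plan is to observe that, for the Chu spaces $\bigl\langle P,\le^P,P\bigr\rangle$ and $\bigl\langle Q,\le^Q,Q\bigr\rangle$, the adjointness condition of Definition~\ref{def:chut} specializes to
\[
p\le^P g(q)\iff f(p)\le^Q q\qquad\text{for all }p\in P,\ q\in Q,
\]
which is exactly the defining property of a monotone Galois connection between the two posets, with $f$ the lower and $g$ the upper adjoint. The four conditions listed in the proposition are the classical unit--counit description of such a connection, so the statement is the standard equivalence of these two presentations; I would establish it by proving each implication directly, invoking only reflexivity and transitivity of $\le^P$ and $\le^Q$.

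For the implication from the Chu transform condition to the four conditions: instantiating the equivalence at $q=f(p)$ and using $f(p)\le^Q f(p)$ yields $p\le^P g(f(p))$, which is the third condition; dually, instantiating at $p=g(q)$ and using $g(q)\le^P g(q)$ yields $f(g(q))\le^Q q$, the fourth condition. Monotonicity of $f$ then follows because, if $p_0\le^P p_1$, the third condition gives $p_0\le^P p_1\le^P g(f(p_1))$, whence the equivalence applied at $q=f(p_1)$ delivers $f(p_0)\le^Q f(p_1)$; monotonicity of $g$ is proved symmetrically, using the fourth condition and the equivalence at $p=g(q_0)$.

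For the converse, assume the four conditions. If $p\le^P g(q)$, then monotonicity of $f$ gives $f(p)\le^Q f(g(q))$, and the fourth condition gives $f(g(q))\le^Q q$, so $f(p)\le^Q q$ by transitivity. Conversely, if $f(p)\le^Q q$, then monotonicity of $g$ gives $g(f(p))\le^P g(q)$, and the third condition gives $p\le^P g(f(p))$, so $p\le^P g(q)$ by transitivity. This establishes the equivalence $p\le^P g(q)\iff f(p)\le^Q q$, that is, that $(f,g)$ is a Chu transform.

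There is no real obstacle here: the argument is purely formal and uses neither antisymmetry nor any property of $P$ and $Q$ beyond being preorders. The only point worth watching is the bookkeeping of which variable gets instantiated to which composite when specializing the biconditional; beyond that, the proposition is essentially a restatement of the folklore correspondence between Galois connections and adjoint pairs, which is why it can be attributed to Benado and Schmidt.
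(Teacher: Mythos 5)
Your proof is correct: it is the standard unit--counit characterization of a Galois connection, and every step checks (as you note, only reflexivity and transitivity of $\le^P$ and $\le^Q$ are used, with the instantiations $q=f(p)$ and $p=g(q)$ handled properly). The paper gives no proof of this proposition, deferring to Schmidt's cited work, so your argument simply supplies the standard verification that the paper leaves to the literature.
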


Schmidt's characterization implies, in particular, that Chu transforms between partially ordered sets always satisfy the density condition.

\begin{corollary}
Let $\bigl\langle P,\le^P\bigr\rangle$ and $\bigl\langle Q,\le^Q\bigr\rangle$ be partially ordered sets. If $(f,g)$ is a Chu transform from $\bigl\langle P,\le^P,P\bigr\rangle$ to $\bigl\langle Q,\le^Q,Q\bigr\rangle$, then $(f,g)$ is dense.
\end{corollary}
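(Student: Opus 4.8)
The plan is to unwind the definition of density (Definition~\ref{def:surj}) in this special case and then read off the conclusion from Schmidt's characterization (Proposition~\ref{proposition:schmidt}). Here the target Chu space is $\bigl\langle Q,\le^Q,Q\bigr\rangle$, so the set of states $B$ equals $Q$, the relation $s$ equals $\le^Q$, and the set of points $Y$ also equals $Q$. Thus, for a given state $q\in Q$, the hypothesis of the density condition asks for the existence of some $q'\in Q$ with $q'\le^Q q$; this is automatic by reflexivity of $\le^Q$, taking $q'=q$. So the density condition reduces to the following: for every $q\in Q$ there exists $p\in P$ such that $f(p)\le^Q q$.

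The second step is to exhibit such a $p$ explicitly. I would set $p=g(q)$ and invoke clause~\eqref{ref:dense} of Proposition~\ref{proposition:schmidt}, which states precisely that $f(g(q))\le^Q q$ for all $q\in Q$. This immediately gives a point $p=g(q)\in P$ with $f(p)\le^Q q$, as required, so $(f,g)$ is dense.

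There is essentially no obstacle here: the content of the corollary is entirely carried by clause~\eqref{ref:dense} of Schmidt's characterization, and the only thing to notice is that for partially ordered sets represented as $\bigl\langle P,\le^P,P\bigr\rangle$ the premise of the density condition is vacuously satisfied because $\le^Q$ is reflexive. One could alternatively give a direct proof from Definition~\ref{def:chut} without citing Schmidt: given $q\in Q$, the adjointness condition at the pair $\bigl(g(q),q\bigr)$ reads $\langle g(q),g(q)\rangle\in{\le^P}\iff\langle f(g(q)),q\rangle\in{\le^Q}$, and the left-hand side holds by reflexivity of $\le^P$, so $f(g(q))\le^Q q$ and we may again take $p=g(q)$. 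Either way the proof is a one-line application of adjointness together with reflexivity.
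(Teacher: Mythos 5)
Your proof is correct and follows essentially the same route as the paper: the paper also notes that one must produce, for each $q\in Q$, some $p\in P$ with $f(p)\le^Q q$, and obtains it by taking $p=g(q)$ and citing clause~\eqref{ref:dense} of Proposition~\ref{proposition:schmidt}. Your extra remarks (the premise of density being automatic by reflexivity, and the direct one-line derivation from adjointness plus reflexivity of $\le^P$) are accurate but do not change the substance of the argument.
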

\begin{proof}
Given $q\in Q$, we have to find $p\in P$ such that $f(p)\le^Q q$. Condition \eqref{ref:dense} of Proposition~\ref{proposition:schmidt} guarantees that taking $p=g(q)$ satisfies the requirement.
\end{proof}

An ultrafilter $U$ over a set $I$ is considered as a partially ordered set $\langle U,\supseteq\rangle$. We prove that, in this framework, Chu transforms correspond to an ordering of ultrafilters introduced in the sixties independently by Mary Ellen Rudin and Keisler. Let us recall its definition:

\begin{definition}
Suppose that $U$ and $V$ are ultrafilters over the sets $I$ and $J$, respectively. Then we say that $U$ is \emph{Rudin-Keisler reducible} to $V$, in symbols $U\le_\mathrm{RK}V$, if there exists a function $h\colon J\to I$ such that for every $X\subseteq I$,
\[
X\in U \iff h^{-1}[X]\in V.
\]
\end{definition}

Of course, it is sufficient for the function $h$ to be defined on some $Y\subseteq J$ such that $Y\in V$.

\begin{theorem}\label{th:RudinKeislerorder}
Let $U$ be an ultrafilter over a set $I$ and $V$ be an ultrafilter over a set $J$. Then the following are equivalent:
\begin{enumerate}
\item\label{rk1} there exists a Chu transform from $\langle U,\supseteq,U\rangle$ to $\langle V,\supseteq,V\rangle$;
\item\label{rk2} $U\le_\mathrm{RK}V$.
\end{enumerate}
\end{theorem}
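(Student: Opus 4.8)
The plan is to prove both implications directly, with $(\ref{rk2})\Rightarrow(\ref{rk1})$ routine and $(\ref{rk1})\Rightarrow(\ref{rk2})$ carrying the weight. For the easy direction, suppose $h\colon J\to I$ witnesses $U\le_{\mathrm{RK}}V$ (extending $h$ to all of $J$ if it is only defined on some $Y\in V$, which does not affect the reducibility). The inverse-image map $X\mapsto h^{-1}[X]$ then sends $U$ into $V$, and the direct-image map $W\mapsto h[W]$ sends $V$ into $U$ because $h^{-1}[h[W]]\supseteq W$. I would check that this pair is a Chu transform from $\langle U,\supseteq,U\rangle$ to $\langle V,\supseteq,V\rangle$, using only the identities $h^{-1}[h[W]]\supseteq W$ and $h[h^{-1}[X]]\subseteq X$ and the monotonicity of images.

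For $(\ref{rk1})\Rightarrow(\ref{rk2})$, given a Chu transform $(f,g)$ I would first apply Proposition~\ref{proposition:schmidt} to the posets $\langle U,\supseteq\rangle$ and $\langle V,\supseteq\rangle$ to obtain that $f$ and $g$ preserve inclusion and satisfy $g(f(X))\subseteq X$ for every $X\in U$ and $f(g(W))\supseteq W$ for every $W\in V$. For each $j\in J$ put $C_j=\bigcap\{X\in U : j\in f(X)\}$. The plan is to prove that $J'=\{j\in J : C_j\neq\emptyset\}$ belongs to $V$, then choose $h(j)\in C_j$ for each $j\in J'$; by the remark following the definition of Rudin--Keisler reducibility it suffices to define $h$ on $J'$. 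To see that such an $h$ works: for $X\in U$ one has $h^{-1}[X]\supseteq J'\cap f(X)$, since $j\in f(X)$ forces $h(j)\in C_j\subseteq X$, and $J'\cap f(X)\in V$; and for $X\notin U$ one applies this to $I\setminus X\in U$ and uses that $V$ is an ultrafilter to conclude $h^{-1}[X]\notin V$.

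The main obstacle is the claim $J'\in V$, which is where the generalization of Rudin's 1956 argument enters. Suppose instead that $Z=\{j\in J:C_j=\emptyset\}\in V$, and fix some $j\in Z$. For every $i\in I$ there is $X\in U$ with $j\in f(X)$ and $i\notin X$; since then $X\subseteq I\setminus\{i\}$, this already forces $I\setminus\{i\}\in U$, so $f(I\setminus\{i\})$ is defined and $j\in f(X)\subseteq f(I\setminus\{i\})$. As $j\in Z$ was arbitrary, $Z\subseteq f(I\setminus\{i\})$ for every $i\in I$, so $W^{\ast}:=\bigcap_{i\in I}f(I\setminus\{i\})\supseteq Z$ lies in $V$ --- here the only closure property used is that the filter $V$ contains every superset of its member $Z$, so the size of this intersection is immaterial. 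Now $g$ preserves inclusion and $g(f(I\setminus\{i\}))\subseteq I\setminus\{i\}$, hence $g(W^{\ast})\subseteq I\setminus\{i\}$ for every $i\in I$, forcing $g(W^{\ast})=\emptyset$ and contradicting $g(W^{\ast})\in U$. (When $U$ is principal the claim is trivial, since then $Z=\emptyset$.)
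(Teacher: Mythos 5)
Your proof is correct and takes essentially the same route as the paper's: both generalize Rudin's argument by looking at $\bigcap_{i\in I}f(I\setminus\{i\})$, showing (via $g$, monotonicity and the adjointness inequalities of Proposition~\ref{proposition:schmidt}) that its complement must lie in $V$ on pain of producing an empty member of $U$, and then defining the reducing map $h$ from the sets $X\in U$ with $j\in f(X)$, with the same two final verifications ($f(X)\cap J'\subseteq h^{-1}[X]$ and the ultrafilter complement argument). The only differences are cosmetic: you choose $h(j)\in C_j$ by the axiom of choice where the paper takes a minimum after identifying $I$ with a cardinal, and you invoke Schmidt's conditions where the paper uses the adjointness directly.
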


\begin{proof}
$(\ref{rk1}\implies\ref{rk2})$ If $U$ is principal, then we are done, since any principal ultrafilter is Rudin-Keisler minimal. If $U$ is non-principal, then in particular the set $I$ must be infinite, hence we may assume without loss of generality that $I$ is an infinite cardinal $\kappa$.

Given a Chu transform $(f,g)$ from $\langle U,\supseteq,U\rangle$ to $\langle V,\supseteq,V\rangle$, we define
\[
Y=J\setminus\bigcap_{\alpha<\kappa}f(\kappa\setminus\{\alpha\}).
\]

\begin{claim}\label{cl:uno}
$Y\in V$.
\end{claim}
\begin{proof}\renewcommand{\qedsymbol}{\oldqedsymbol\textsubscript{Claim~\ref{cl:uno}}}
Suppose not. Then $\bigcap_{\alpha<\kappa}f(\kappa\setminus\{\alpha\})\in V$, hence $g\bigl(\bigcap_{\alpha<\kappa}f(\kappa\setminus\{\alpha\})\bigr)\in U$. In particular, this set is non-empty, so let us pick $\beta\in g\bigl(\bigcap_{\alpha<\kappa}f(\kappa\setminus\{\alpha\})\bigr)$. Then we have
\[
g\Biggl(\bigcap_{\alpha<\kappa}f(\kappa\setminus\{\alpha\})\Biggr)\not\subseteq\kappa\setminus\{\beta\}
\]
and therefore
\[
\bigcap_{\alpha<\kappa}f(\kappa\setminus\{\alpha\})\not\subseteq f(\kappa\setminus\{\beta\}),
\]
a contradiction.
\end{proof}

Let us define $h\colon Y\to\kappa$ as follows: for all $y\in Y$
\[
h(y)=\min\Set{\alpha<\kappa}{y\notin f(\kappa\setminus\{\alpha\})}.
\]
We claim that the function $h$ witnesses the Rudin-Keisler reducibility of $U$ to $V$.

\begin{claim}\label{cl:due}
If $X\in U$ then $f(X)\cap Y\subseteq h^{-1}[X]$, and therefore $h^{-1}[X] \in V$.
\end{claim}
\begin{proof}\renewcommand{\qedsymbol}{\oldqedsymbol\textsubscript{Claim~\ref{cl:due}}}
Once we show the first part of the claim, the second part will follow from Claim~\ref{cl:uno} and the fact that $V$ is an ultrafilter which contains $f(X)$ and $Y$.

For the first part, let $y\in f(X)\cap Y$, we want to show that $h(y)\in X$. Suppose not; then $X\subseteq\kappa\setminus\{h(y)\}$. By condition \eqref{ref:op} of Proposition~\ref{proposition:schmidt}, the function $f$ is inclusion-preserving, which gives $f(X)\subseteq f(\kappa\setminus\{h(y)\})$. But $y\in f(X)$ and therefore $y\in f(\kappa\setminus\{h(y)\})$, contradicting the definition of $h$.
\end{proof}

\begin{claim}\label{cl:tre}
For every $X\subseteq\kappa$, if $h^{-1}[X]\in V$ then $X\in U$.
\end{claim}
\begin{proof}\renewcommand{\qedsymbol}{\oldqedsymbol\textsubscript{Claim~\ref{cl:tre}}}
Suppose for a contradiction that $X\subseteq \kappa$ is such that $h^{-1}[X]\in V$ but $X\notin U$. Then $\kappa\setminus X\in U$
and so by Claim~\ref{cl:due}, $h^{-1}[\kappa\setminus X]\in V$. We obtain that $\emptyset=h^{-1}[X]\cap h^{-1}[\kappa\setminus X]\in V$, which is a contradiction.
\end{proof}

Now, Claim~\ref{cl:due} and Claim~\ref{cl:tre} together imply that $X\in U\iff h^{-1}[X]\in V$ for all $X\subseteq\kappa$, as we wanted to show.

$(\ref{rk2}\implies\ref{rk1})$ Let $h\colon J\to I$ be such that for all $X\subseteq I$
\[
X\in U\iff h^{-1}[X]\in V.
\]
It is straightforward to check that by letting
\[
\begin{split}
f\colon U &\longrightarrow V \\
X &\longmapsto h^{-1}[X]
\end{split}
\qquad\text{and}\qquad
\begin{split}
g\colon V &\longrightarrow U \\
Y &\longmapsto h[Y]
\end{split}
\]
we obtain a Chu transform $(f,g)$ from $\langle U,\supseteq,U\rangle$ to $\langle V,\supseteq,V\rangle$.
\end{proof}

Theorem~\ref{th:RudinKeislerorder} generalizes the following well-known result, credited by Rudin to Hewitt Kenyon.

\begin{corollary}[{Rudin~\cite[Theorem~1.5]{MR0080902}}] Let $U$ and $V$ be non-principal ultrafilters over $\omega$. If the partially ordered sets $\langle U,\supseteq\rangle$ and $\langle V,\supseteq\rangle$ are isomorphic, then there exists a bijective function $\pi\colon\omega\to\omega$ such that $V=\Set{\pi[X]}{X\in U}$.
\end{corollary}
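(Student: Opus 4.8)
The plan is to deduce the statement from Theorem~\ref{th:RudinKeislerorder} together with the classical structure theory of the Rudin-Keisler ordering. Suppose $\phi\colon\langle U,\supseteq\rangle\to\langle V,\supseteq\rangle$ is an order isomorphism. The first step is to observe that the pair $(\phi,\phi^{-1})$ satisfies the four conditions of Proposition~\ref{proposition:schmidt}: the first two are precisely the statements that $\phi$ and $\phi^{-1}$ are order-preserving, while the last two hold with equality because $\phi^{-1}\circ\phi$ and $\phi\circ\phi^{-1}$ are identity maps. Hence $(\phi,\phi^{-1})$ is a Chu transform from $\langle U,\supseteq,U\rangle$ to $\langle V,\supseteq,V\rangle$, and by the same token $(\phi^{-1},\phi)$ is a Chu transform from $\langle V,\supseteq,V\rangle$ to $\langle U,\supseteq,U\rangle$. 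Applying Theorem~\ref{th:RudinKeislerorder} to each direction then yields $U\le_\mathrm{RK}V$ and $V\le_\mathrm{RK}U$.

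It remains to establish the purely ultrafilter-theoretic fact that two non-principal ultrafilters over $\omega$ that are mutually Rudin-Keisler reducible are isomorphic via a bijection of $\omega$. I would argue as follows: fix $h,k\colon\omega\to\omega$ witnessing $U\le_\mathrm{RK}V$ and $V\le_\mathrm{RK}U$ respectively (extending them to total functions on $\omega$ if they are initially defined only on a set in the ultrafilter). Then $h\circ k$ is a Rudin-Keisler self-map of $U$, so by the classical lemma (M.E.\ Rudin, Kat{\v e}tov) that every such self-map agrees with the identity on a set in the ultrafilter, there is $C\in U$ on which $h\circ k$ acts as the identity. Putting $A=C$ and $B=k[C]$, one checks that $B\in V$ and that $h\restriction B\colon B\to A$ is a bijection with inverse $k\restriction A$. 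Using non-principality of $U$ and $V$, one may shrink $A$ inside $U$ and $B$ inside $V$ so that both $\omega\setminus A$ and $\omega\setminus B$ are infinite; then $h\restriction B$ extends to a bijection $\sigma\colon\omega\to\omega$. A short computation using $B\in V$ shows that $X\in U\iff\sigma^{-1}[X]\in V$ for every $X\subseteq\omega$, and therefore $\pi=\sigma^{-1}$ is a bijection with $V=\Set{\pi[X]}{X\in U}$, as required.

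The first step is routine bookkeeping with Proposition~\ref{proposition:schmidt} and Theorem~\ref{th:RudinKeislerorder}. The main obstacle is the second step, and in particular the self-map lemma for ultrafilters, whose proof rests on the combinatorial fact that a fixed-point-free function on a set admits a partition of its domain into three parts none of which it maps into itself. One could instead bypass this --- at the cost of not using Theorem~\ref{th:RudinKeislerorder}, thereby essentially reproducing Rudin's original argument --- by observing that the coatoms of the poset $\langle U,\supseteq\rangle$ are exactly the sets $\omega\setminus\{n\}$ for $n\in\omega$ (non-principality being used to ensure that each such set lies in $U$), so that $\phi$ induces a bijection $\pi$ of $\omega$ via $\phi(\omega\setminus\{n\})=\omega\setminus\{\pi(n)\}$, and then verifying $\phi(X)=\pi[X]$ for every $X\in U$ by writing $X$ as the join $\bigvee_{n\notin X}(\omega\setminus\{n\})$ in $\langle U,\supseteq\rangle$ and using that $\phi$ preserves such joins.
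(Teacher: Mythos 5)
Your proposal is correct and follows essentially the same route as the paper: an order isomorphism gives Chu transforms in both directions via Proposition~\ref{proposition:schmidt}, and a double application of Theorem~\ref{th:RudinKeislerorder} yields $U\le_\mathrm{RK}V$ and $V\le_\mathrm{RK}U$. The only difference is that the paper finishes by citing M.~E.~Rudin for the classical fact that mutual Rudin--Keisler reducibility of ultrafilters over $\omega$ gives a bijection $\pi$ with $V=\Set{\pi[X]}{X\in U}$, whereas you sketch a proof of that fact (via the fixed-point lemma for Rudin--Keisler self-maps), which is fine.
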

\begin{proof} Let $f\colon U\to V$ be an isomorphism of partially ordered sets. Proposition~\ref{proposition:schmidt} implies that $\bigl(f,f^{-1}\bigr)$ is a Chu transform from $\langle U,\supseteq,U\rangle$ to $\langle V,\supseteq,V\rangle$ and $\bigl(f^{-1},f\bigr)$ is a Chu transform from $\langle V,\supseteq,V\rangle$ to $\langle U,\supseteq,U\rangle$. By a double application of Theorem~\ref{th:RudinKeislerorder} we obtain that $U\le_\mathrm{RK}V$ and $V\le_\mathrm{RK}U$, which is equivalent (see, e.g., Rudin~\cite[III.A]{MR0273581}) to the existence of a bijective function $\pi\colon\omega\to\omega$ such that $V=\Set{\pi[X]}{X\in U}$.
\end{proof}

\section{Graphs as Chu spaces}\label{section:graphs}

In this section, we briefly consider two possible realizations of graphs and related structures as Chu spaces.

\subsection{Strictly continuous functions between graphs}
A \emph{graph} is a pair $\langle V,E\rangle$ such that $E\subseteq{[V]}^2$. Each graph $\langle V,E\rangle$ can be naturally represented as an extensional Chu space $\langle V,\in,E\cup\{\emptyset\}\rangle$, where the points are the vertices of the graph, the relation is the membership relation, and the states consist of all the edges of the graph plus the empty state. To describe Chu transforms in this context, we recall a definition from a recent paper by Stuart Margolis and John Rhodes~\cite{MR4308405}, where strictly continuous functions on graphs are introduced.

\begin{definition}\label{def:strictcont}
Let $\langle V,E\rangle$ and $\langle V',E'\rangle$ be graphs and let $W\subseteq V$. A function $f\colon W\to V'$ is \emph{strictly continuous} if for every $e'\in E'$, either $f^{-1}[e']$ is empty or $f^{-1}[e']\in E$.
\end{definition}

We chose the terminology ``strictly continuous'' instead of the original ``strict continuous''. The interest in such functions comes from the following Lemma~\ref{th:MR}. Recall that a graph is \emph{connected} if any two distinct vertices are joined by a finite path.

\begin{lemma}[{Margolis and Rhodes~\cite[Lemma~2.9]{MR4308405}}]\label{th:MR}
Let $\langle V,E\rangle$ be a finite connected graph, let $W\subseteq V$, and let $f\colon W\to V$ be a strictly continuous function. If there exists $v\in V$ such that $\abs*{f^{-1}[\{v\}]}=1$, then $f$ is an automorphism of $\langle V,E\rangle$.
\end{lemma}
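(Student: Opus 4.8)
The plan is to show that a strictly continuous self-map of a finite connected graph which is ``injective at one vertex'' must in fact be a bijection, and then invoke strict continuity once more to see that it preserves edges in both directions. First I would argue that $f$ is surjective. The key observation is that strict continuity controls fibre sizes: if $e' = \{v, w\} \in E$ then $f^{-1}[e'] = f^{-1}[\{v\}] \cup f^{-1}[\{w\}]$ is either empty or an edge, hence has size at most $2$; since $e'$ has an endpoint, namely $v$ with $\abs{f^{-1}[\{v\}]} = 1$, we can trace how singleton fibres propagate along edges. Precisely, if $\abs{f^{-1}[\{v\}]} = 1$ and $\{v, w\} \in E$, then $f^{-1}[\{v,w\}]$ is a non-empty subset of $V$ of size at most $2$ containing the single preimage of $v$, so it is either that singleton or a genuine edge $\{x, y\} \in E$ with $f(x) = v$; in the latter case $\abs{f^{-1}[\{w\}]} \le 1$, and it cannot be $0$ (else $f^{-1}[\{v,w\}]$ would be a singleton, not an edge), so $\abs{f^{-1}[\{w\}]} = 1$ as well. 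The subtlety is the case $f^{-1}[\{v,w\}] = f^{-1}[\{v\}]$, i.e.\ $w$ has empty fibre; I would handle this by a connectedness/counting argument as in the next paragraph rather than purely locally.

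Second, I would run the propagation globally. Let $S = \Set{v \in V}{\abs{f^{-1}[\{v\}]} = 1}$, which is non-empty by hypothesis. The local analysis above shows that if $v \in S$ and $\{v, w\} \in E$ then \emph{either} $w \in S$ \emph{or} $f^{-1}[\{w\}] = \emptyset$. To rule out the second alternative, observe that the fibres $f^{-1}[\{v\}]$ for $v \in V$ partition $W \subseteq V$, and $\abs{W} \le \abs{V}$; since each $v \in S$ contributes a fibre of size exactly $1$, a vertex with empty fibre would force some other fibre to have size $\ge 2$, but strict continuity caps every fibre of a vertex lying on an edge at size $2$. A cleaner route: show by induction along finite paths (using connectedness) that the relation ``$v \in S$'' spreads to every vertex adjacent to a vertex in $S$, hence to all of $V$; the only obstruction is a neighbour $w$ with $f^{-1}[\{w\}] = \emptyset$, and one eliminates this by noting that then $\sum_{v \in V} \abs{f^{-1}[\{v\}]} \le \abs{V} - 1 < \abs{W}$ is impossible once we know enough vertices are in $S$ — so I would first establish that $V \setminus S$ consists only of empty-fibre vertices, then a cardinality count $\abs{W} = \sum_v \abs{f^{-1}[\{v\}]} \le \abs{S}$ together with $\abs{S} \le \abs{V}$ and the partition structure forces $V = S$ and $W = V$. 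Thus $f$ is a bijection $V \to V$.

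Third, with $f$ a bijection I would verify it is a graph automorphism. Strict continuity already gives one direction: for $e' \in E'= E$, since $f$ is surjective $f^{-1}[e']$ is non-empty, hence $f^{-1}[e'] \in E$; as $f$ is injective $\abs{f^{-1}[e']} = 2$, so $f$ maps this edge onto $e'$, showing $f$ pulls edges back to edges, equivalently (by bijectivity) pushes edges forward to edges. For the converse direction, let $\{x, y\} \in E$; I must show $\{f(x), f(y)\} \in E$. Since $f$ is a bijection, $\{f(x),f(y)\}$ is a $2$-element subset of $V$, and $f^{-1}[\{f(x),f(y)\}] = \{x,y\} \in E$ is non-empty, so strict continuity does \emph{not} directly apply to elements of $E$ only if $\{f(x),f(y)\} \notin E$ — but the hypothesis of strict continuity is a statement about $e' \in E'$, so to get information when $\{f(x),f(y)\} \notin E$ one counts: $f$ restricts to a bijection from $E$ (a set of $2$-subsets) into $2$-subsets of $V$, and we have just shown the image of this restriction lies in $E$; since $f$ induces an injection on the finite set $[V]^2$, and edges go to edges, a counting argument on $\abs{E}$ forces the induced map on edges to be a bijection of $E$, i.e.\ $\{x,y\} \in E \iff \{f(x),f(y)\} \in E$.

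\textbf{Main obstacle.} The genuinely delicate point is step two: ruling out vertices with empty fibre and thereby proving surjectivity, since strict continuity by itself only forbids fibres of size $\ge 3$ on edge-vertices and says nothing directly about isolated-looking vertices. The interplay of (a) connectedness, (b) the cap of $2$ on fibres, and (c) the single vertex $v$ with a singleton fibre is exactly what makes the conclusion work, and getting the induction/counting bookkeeping right — especially simultaneously tracking which vertices are in $S$, which have empty fibre, and the total $\sum_v \abs{f^{-1}[\{v\}]} = \abs{W} \le \abs{V}$ — is where the real content lies. The edge-preservation in step three is then essentially a finite counting triviality once bijectivity is in hand.
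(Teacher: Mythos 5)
The paper itself does not prove this lemma (it is quoted from Margolis and Rhodes), so I compare your sketch against the standard argument. Your overall architecture --- propagate singleton fibres along edges, conclude $W=V$ and bijectivity, then a finite counting argument on $E$ to get edge-preservation in both directions --- is the right one, and your third step is essentially correct. The problem is the step you yourself flag as the crux. The alternative you try to rule out, namely a neighbour $w$ of a singleton-fibre vertex $v$ with $f^{-1}[\{w\}]=\emptyset$, cannot occur at all: since $E\subseteq[V]^2$, every edge has exactly two elements, and Definition~\ref{def:strictcont} says that a non-empty preimage of an edge \emph{is} an edge. The set $f^{-1}[\{v,w\}]$ is non-empty (it contains the unique preimage of $v$), so it is a two-element edge $\{x,y\}$, exactly one of whose elements maps to $v$; the other must then map to $w$, and no vertex outside $\{x,y\}$ can map to $w$, so $\abs*{f^{-1}[\{w\}]}=1$. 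Thus the set $S$ of singleton-fibre vertices is closed under adjacency, connectedness gives $S=V$ at once, and $\abs{W}=\sum_{v\in V}\abs*{f^{-1}[\{v\}]}=\abs{V}$ forces $W=V$ and bijectivity. No counting is needed at this stage.

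The counting argument you propose instead is genuinely flawed, not merely superfluous. You want a contradiction from $\sum_{v\in V}\abs*{f^{-1}[\{v\}]}\le\abs{V}-1<\abs{W}$, but $\sum_{v\in V}\abs*{f^{-1}[\{v\}]}$ equals $\abs{W}$ exactly, and a priori $\abs{W}$ may well be smaller than $\abs{V}$: the map $f$ is only defined on the subset $W$, and $W=V$ (equivalently, surjectivity) is precisely what is being proved, so the inequality $\abs{V}-1<\abs{W}$ begs the question. Likewise, your intermediate claim that $V\setminus S$ consists only of empty-fibre vertices does not follow from your local analysis, since the propagation you describe would be blocked at an empty-fibre vertex and gives no information about vertices beyond it. Once you replace this step by the direct observation above, the remainder of your argument (the map $e\mapsto f^{-1}[e]$ is an injection of the finite set $E$ into itself, hence a bijection, hence $\{x,y\}\in E\iff\{f(x),f(y)\}\in E$) goes through and completes the proof.
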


In analogy with Proposition~\ref{proposition:vickers}, we characterize Chu transforms between graphs as strictly continuous functions.

\begin{proposition}\label{proposition:graphchu}
Let $\langle V,E\rangle$ and $\langle V',E'\rangle$ be graphs. For a pair of functions $f\colon V\to V'$ and $g\colon E'\cup\{\emptyset\}\to E\cup\{\emptyset\}$, the following conditions are equivalent:
\begin{itemize}
\item $(f,g)$ is a Chu transform from $\langle V,\in,E\cup\{\emptyset\}\rangle$ to $\langle V',\in,E'\cup\{\emptyset\}\rangle$;
\item $f$ is a strictly continuous function and for every $e'\in E'\cup\{\emptyset\}$, $f^{-1}[e']=g(e')$.
\end{itemize}
\end{proposition}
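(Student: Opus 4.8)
The plan is to prove both implications directly from the definition of a Chu transform, specialized to the membership relation between vertices and edges (plus the empty state). Write $\bm{A}=\langle V,\in,E\cup\{\emptyset\}\rangle$ and $\bm{A}'=\langle V',\in,E'\cup\{\emptyset\}\rangle$. The adjointness condition for a pair $(f,g)$ reads: for every $v\in V$ and every $e'\in E'\cup\{\emptyset\}$,
\[
v\in g(e')\iff f(v)\in e'.
\]
Reading this with $e'$ fixed, the right-hand side says $v\in f^{-1}[e']$ and the left-hand side says $v\in g(e')$; so adjointness is equivalent to the single statement that $g(e')=f^{-1}[e']$ for every $e'\in E'\cup\{\emptyset\}$. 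This observation does most of the work, and it is the exact analogue of the argument used for Proposition~\ref{proposition:vickers}.

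First I would prove the downward implication. Assume $(f,g)$ is a Chu transform. By the observation above, $g(e')=f^{-1}[e']$ for every $e'\in E'\cup\{\emptyset\}$, which already gives the second bullet of the equivalence except for the claim that $f$ is strictly continuous. For that, fix $e'\in E'$. Then $g(e')\in E\cup\{\emptyset\}$ because $g$ maps into the state set of $\bm{A}$; since $g(e')=f^{-1}[e']$, we get that $f^{-1}[e']$ is either $\emptyset$ or a member of $E$, which is precisely Definition~\ref{def:strictcont}. (Note $f^{-1}[\emptyset]=\emptyset$, so the empty state causes no trouble.) Hence $f$ is strictly continuous and $f^{-1}[e']=g(e')$ holds for all $e'\in E'\cup\{\emptyset\}$.

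Conversely, assume $f$ is strictly continuous and $f^{-1}[e']=g(e')$ for every $e'\in E'\cup\{\emptyset\}$. To check $(f,g)$ is a Chu transform I must verify two things: that $f$ maps $V$ into $V'$ (immediate, since $f\colon V\to V'$ is given) and that $g$ maps $E'\cup\{\emptyset\}$ into $E\cup\{\emptyset\}$. The latter is exactly strict continuity: for $e'\in E'$ we have $g(e')=f^{-1}[e']$, which is $\emptyset$ or an element of $E$; and $g(\emptyset)=f^{-1}[\emptyset]=\emptyset\in E\cup\{\emptyset\}$. Finally adjointness holds because, for any $v\in V$ and $e'\in E'\cup\{\emptyset\}$, the equivalence $v\in g(e')\iff f(v)\in e'$ is just the statement $g(e')=f^{-1}[e']$ unwound. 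Thus $(f,g)$ is a Chu transform from $\bm{A}$ to $\bm{A}'$.

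There is essentially no obstacle here; the only point that needs a moment's care is the bookkeeping of the empty state $\emptyset$, ensuring that it is consistently treated as a state on both sides and that $f^{-1}[\emptyset]=\emptyset$ lands back in $E\cup\{\emptyset\}$, which it trivially does. The proof is entirely parallel to that of Proposition~\ref{proposition:vickers}, with ``strictly continuous'' playing the role that ``continuous together with $f^{-1}[b]=g(b)$'' plays there; indeed, strict continuity is designed precisely so that the single condition $g(e')=f^{-1}[e']$ is well defined as a map into the state set.
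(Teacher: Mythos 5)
Your proof is correct and follows essentially the same route as the paper: both directions come down to the observation that the adjointness condition $v\in g(e')\iff f(v)\in e'$ is exactly the statement $g(e')=f^{-1}[e']$, with strict continuity in the forward direction read off from the fact that $g$ takes values in $E\cup\{\emptyset\}$. The extra bookkeeping about the empty state is harmless (and in the converse direction the codomain of $g$ is already given by hypothesis, so nothing more was needed there).
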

\begin{proof}
If $(f,g)$ is a Chu transform and $e'\in E'\cup\{\emptyset\}$, then for every $v\in V$ we have
\begin{equation}\label{eq:gr}
v\in f^{-1}[e']\iff f(v)\in e'\iff v\in g(e')
\end{equation}
and therefore $f^{-1}[e']=g(e')$. In particular, $f^{-1}[e']\in E\cup\{\emptyset\}$, which means that $f$ is strictly continuous.

Conversely, if $f^{-1}[e']=g(e')$ for each $e'\in E'\cup\{\emptyset\}$, then again the two equivalences of \eqref{eq:gr} show that $(f,g)$ is a Chu transform.
\end{proof}

\begin{remark}\label{remark:graphchu}
For Chu transforms as in Proposition~\ref{proposition:graphchu}, the density condition translates to the requirement that $f^{-1}[e']\in E$ whenever $e'\in E'$. In other words, dense Chu transforms naturally correspond to strictly continuous functions for which the inverse image of each edge is non-empty.
\end{remark}

From Proposition~\ref{proposition:graphchu} and Remark~\ref{remark:graphchu}, we see that both Chu transforms and dense Chu transforms in this coding give rise to purely graph-theoretic notions.

\subsection{Adjointable maps between orthosets with $0$}
In recent work, Jan Paseka and Thomas Vetterlein~\cite{MR4912468} introduced orthosets with $0$ as a framework to investigate the concept of orthogonality in Hilbert spaces.

\begin{definition}[{Paseka and Vetterlein~\cite[Definition 2.1]{MR4912468}}]
An \emph{orthoset with $0$} is a triple $\langle X,\perp,0\rangle$ where $X$ is a non-empty set, $\perp$ is a binary relation on $X$, and $0\in X$, such that the following conditions are satisfied:
\begin{itemize}
\item for all $x_0,x_1\in X$, if $x_0\perp x_1$ then $x_1\perp x_0$;
\item for all $x\in X$, if $x\perp x$ then $x=0$;
\item for all $x\in X$, $0\perp x$.
\end{itemize}
\end{definition}

\begin{remark}
It is clear from the definition that if $\langle X,\perp,0\rangle$ is an orthoset with $0$ then $X\setminus\{0\}$ inherits the structure of a graph, in which two vertices $x_0$ and $x_1$ are joined by an edge if and only if $x_0\perp x_1$.
\end{remark}

We represent orthosets with $0$ as Chu spaces $\langle X,\perp,X\rangle$. In this realization, many properties of an orthoset with $0$ can be recast as properties of the associated Chu space. For example, $\langle X,\perp,0\rangle$ is \emph{irredundant} in the sense of \cite[Definition 2.6.(i)]{MR4912468} if and only if $\langle X,\perp,X\rangle$ is separated if and only if $\langle X,\perp,X\rangle$ is extensional.

\begin{definition}[{Paseka and Vetterlein~\cite[Definition 3.1]{MR4912468}}]
Let $\langle X,\perp,0\rangle$ and $\langle Y,{\perp},0\rangle$ be orthosets with $0$. Given $f\colon X\to Y$ and $g\colon Y\to X$, we say that $g$ is an \emph{adjoint} of $f$ if for every $x\in X$ and every $y\in Y$
\[
x\perp g(y)\iff f(x)\perp y.
\]
\end{definition}

The following characterization is immediate from the definitions.

\begin{proposition}\label{proposition:ortho}
Let $\langle X,\perp,0\rangle$ and $\langle Y,\perp,0\rangle$ be orthosets with $0$. For a pair of functions $f\colon X\to Y$ and $g\colon Y\to X$, the following conditions are equivalent:
\begin{itemize}
\item $(f,g)$ is a Chu transform from $\langle X,\perp,X\rangle$ to $\langle Y,\perp,Y\rangle$;
\item $g$ is an adjoint of $f$.
\end{itemize}
\end{proposition}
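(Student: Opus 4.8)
The plan is simply to unfold Definition~\ref{def:chut} in the special case in which both Chu spaces have the form $\langle X,\perp,X\rangle$ and $\langle Y,\perp,Y\rangle$, that is, in which the set of points coincides with the set of states and the relation between the two sorts is the orthogonality relation $\perp$. Under this coding we have $r={\perp}\subseteq X\times X$, $A=X$, $s={\perp}\subseteq Y\times Y$, and $B=Y$; hence a Chu transform from $\langle X,\perp,X\rangle$ to $\langle Y,\perp,Y\rangle$ consists precisely of a function $f\colon X\to Y$ together with a function $g\colon B\to A$, i.e.\ $g\colon Y\to X$, which are exactly the data of a pair $(f,g)$ as in the statement.

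It then remains to observe that the adjointness requirement of Definition~\ref{def:chut}, namely $\langle x,g(y)\rangle\in r\iff\langle f(x),y\rangle\in s$ for all $x\in X$ and all $y\in Y$, reads in this situation as $x\perp g(y)\iff f(x)\perp y$ for all $x\in X$ and all $y\in Y$, which is verbatim the defining condition for $g$ to be an adjoint of $f$ in the sense of Paseka and Vetterlein. The two conditions in the proposition are thus literally the same, so there is no real obstacle; the only point requiring a moment's care is the bookkeeping of the notation, in particular checking that the domain of $g$ is the state set $Y$ of the target space and not $X$, and that no separation, extensionality, or closure hypothesis on the orthosets is needed for the equivalence to go through.
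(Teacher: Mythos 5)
Your proposal is correct and matches the paper, which states this result as ``immediate from the definitions'' without further argument: unfolding Definition~\ref{def:chut} for the Chu spaces $\langle X,\perp,X\rangle$ and $\langle Y,\perp,Y\rangle$ yields exactly the adjointness condition $x\perp g(y)\iff f(x)\perp y$ of Paseka and Vetterlein. Your bookkeeping of the sorts (in particular that $g$ maps the state set $Y$ of the target to the state set $X$ of the source, with no separation or extensionality hypotheses needed) is exactly the verification intended.
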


We note that, unlike the cases considered previously, the two conditions of Proposition~\ref{proposition:ortho} are symmetric in $f$ and $g$, meaning that $(f,g)$ is a Chu transform from $\langle X,\perp,X\rangle$ to $\langle Y,\perp,Y\rangle$ if and only if $(g,f)$ is a Chu transform from $\langle Y,\perp,Y\rangle$ to $\langle X,\perp,X\rangle$.

\section{Conclusion and further directions}

We have investigated the concepts of Chu spaces and Chu transforms, coming from category theory, for which we showed the use in various parts of logic and mathematics in general. These include abstract logics, topology (in particular regarding compactness properties, see \S\ref{sec:inconflow}), the theory of ultrafilters (\S\ref{sec:ultrafilters}), and graph theory (\S\ref{section:graphs}). We have shown that many classically considered notions in mathematics, such as the Rudin-Keisler order between ultrafilters, are special cases of Chu transforms.

As a main contribution of the paper, in \S\ref{sec:Benthem} and \S\ref{sec:inconflow} we have established a number of preservation results for classes of $\mathcal{L}^\mathrm{II}_{\infty,\infty}$-formulas under various kinds of Chu transforms. Our main theorem in this respect is the characterization of $\mathcal{L}^\mathrm{II}_{\omega,\omega}$-formulas preserved under dense Chu transforms, Theorem~\ref{theorem:preservationdense}.

Feferman observed in \cite[Remark 2]{feferman:johan99} that his proof of van Benthem's Theorem~\ref{the-vb} works also for countable admissible fragments of $\mathcal{L}_{\omega_1,\omega}$. We do not know to what extent Theorem~\ref{theorem:preservationdense} extends to larger infinitary languages.

\begin{question}
If an $\mathcal{L}^\mathrm{II}_{\infty,\omega}$-formula is preserved under dense Chu transforms, must it be logically equivalent to an inconsistency-flow formula?
\end{question}

To address this question, it is natural to look at the techniques of Barwise and van Benthem~\cite{MR1777793}, which yield a generalized interpolation theorem for $\mathcal{L}_{\infty,\omega}$.

Another interesting direction is to extend our analysis from $2$-valued Chu spaces to $k$-valued Chu spaces, namely triples $\langle X,r,A\rangle$ with $r\colon X\times A\to k$. The interest in such spaces is motivated by Vaughan Pratt~\cite[Theorem~14]{MR1265061}, who constructed a full concrete embedding from the category of $n$-ary relational structures and homomorphisms into the category of $2^n$-valued Chu spaces and Chu transforms. For this reason, $2^n$-valued Chu spaces seem well suited to express the notion of homomorphism between relational structures and the corresponding preservation results, such as Rossman's homomorphism preservation theorem \cite[Theorem~5.16]{MR2444913}. More recently, Shengyang Zhong~\cite{zhong} used $3$-valued Chu spaces to formalize the intuition behind quantum logics and Holliday's fundamental logic \cite{holliday}.

\begin{question}
Which properties are preserved by Chu transforms between $k$-valued Chu spaces?
\end{question}

In future work, we also hope to give syntactic characterizations and preservation theorems for two other related category-theoretic concepts: Dialectica morphisms and generalized Galois-Tukey connections.

\section*{Acknowledgements}
We thank Tom Benhamou for pointing out the connection between Rudin~\cite[Theorem~1.5]{MR0080902} and our Theorem~\ref{th:RudinKeislerorder}. We also thank the anonymous referee, whose report was particularly helpful. It is thanks to a suggestion in the report that we considered recursively saturated model pairs and were able to significantly shorten the proof of Theorem~\ref{theorem:preservationdense}, which originally was proved using the method of elementary chains.

\end{document}